\definecolor{cvprblue}{rgb}{0.21,0.49,0.74}
\newtheorem*{theorem}{Theorem}
\newtheorem{lemma}{Lemma}
\newtheorem*{proof}{Proof}
\newtheorem{proposition}{Proposition}
\newtheorem*{example}{Example}
\title{Adaptive Softassign via Hadamard-Equipped Sinkhorn} 
\author{Binrui Shen\\
Department of Applied Mathematics, School of Mathematics and Physics, 
\\
Xi’an Jiaotong-Liverpool University, Suzhou 215123, P.R. China\\
Department of Mathematical Sciences, School of Physical Sciences, \\ University of Liverpool, Liverpool, United Kingdom\\
{\tt\small binrui.shen19@student.xjtlu.edu.cn}
\and
Qiang Niu\\
Department of Applied Mathematics, School of Mathematics and Physics, \\ Xi’an Jiaotong-Liverpool University, Suzhou 215123, P.R. China\\
{\tt\small qiang.niu@xjtlu.edu.cn}\and
Shengxin Zhu$^\nmid$\\
Research Centers for Mathematics, Advanced Institute of Natural Science,\\ Beijing Normal University, Zhuhai 519087, P.R.China\\
Guangdong Provincial Key Laboratory of Interdisciplinary Research and Application for Data Science, \\
BNU-HKBU United International College, Zhuhai 519087, P.R. China\\
{\tt\small Shengxin.Zhu@bnu.edu.cn}}
\begin{document}
\maketitle
\begin{abstract}

Softassign is a pivotal method in graph matching and other learning tasks. Many softassign-based algorithms exhibit performance sensitivity to a parameter in the softassign. However, tuning the parameter is challenging and almost done empirically. This paper proposes an adaptive softassign method for graph matching by analyzing the relationship between the objective score and the parameter. This method can automatically tune the parameter based on a given error bound to guarantee accuracy. The Hadamard-Equipped Sinkhorn formulas introduced in this study significantly enhance the efficiency and stability of the adaptive softassign. Moreover, these formulas can also be used in optimal transport problems. The resulting adaptive softassign graph matching algorithm enjoys significantly higher accuracy than previous state-of-the-art large graph matching algorithms while maintaining comparable efficiency. 
\end{abstract}
\section{Introduction}
\label{sec:intro}
Graph matching aims to find a correspondence between two graphs. As a fundamental problem in computer vision and pattern recognition, it is widely used in shape matching \cite{2010Learning,2011Scale}, detection of similar pictures\cite{shen2020fabricated}, medical imaging \cite{2012med},
graph similarity computation \cite{lan2022aednet,lan2022more} and face authentication \cite{wiskott1999face, kotropoulos2000frontal}. It can even be used in activity analysis \cite{chen2012efficient} and recently in bioinformatics \cite{xu2019scalable}. 

The general graph matching is an NP-hard problem, because of its combinatorial nature \cite{1963The}. Therefore, recent works on graph matching mainly focus on continuous relaxation to obtain a sub-optimal solution with an acceptable cost by constructing approximate optimization methods. Popular approaches include, but are not limited to, spectral-based methods \cite{umeyama1988eigendecomposition, leordeanu2005spectral, luo2003spectral, robles2007riemannian},
continuous path optimization \cite{zaslavskiy2008path,maron2018probably,wang2017graph},  random walk \cite{  2010Reweighted} and probabilistic modeling \cite{egozi2012probabilistic} and optimal transport methods \cite{xu2019gromov, xu2019scalable}.


Among recently proposed graph matching algorithms, projected gradient-based algorithms \cite{gold1996graduated, cour2006balanced, leordeanu2009integer,lu2016fast, shen2022dyna} have drawn a lot of attention due to their competitive performances in large graph matching problems. These algorithms iteratively update the solution by projecting gradient matrices into a feasible region, typically addressing a \textit{linear assignment problem}. The performance of these algorithms mainly depends on the underlying projection methods. Among projections, the discrete projection may lead the matching algorithm \cite{gold1996graduated} to converge to a circular sequence  \cite{tian2012convergence}; the doubly stochastic projection used in \cite{lu2016fast} suffers from poor convergence when the numerical values of the input matrix are large \cite{rontsis2020optimal}. Softassign is a more flexible method that allows for a trade-off between efficiency and accuracy. It is proposed to solve linear assignment problems in \cite{kosowsky1994invisible} and is later used as an approximate projection method in graph matching \cite{gold1996graduated}. It consists of an exponential operator and the Sinkhorn method \cite{sinkhorn1964relationship} to achieve inflation and bistochastic normalization, respectively. The inflation step can effectively attenuate unreliable correspondences while simultaneously amplifying reliable ones \cite{2010Reweighted}.

The performance of the softassign-based graph matching algorithms depends largely on the inflation parameter in the inflation step \cite{shen2022dyna}. Previous algorithms tune this parameter empirically \cite{gold1996graduated,2010Reweighted,shen2022dyna,zheng2020fast}. To address such an inconvenience and improve accuracy, this paper proposes an adaptive softassign method. The main contributions of this paper are summarized as follows:
\begin{itemize}
    \item \textbf{Adaptive softassign.} We propose an adaptive softassign method for large graph matching problems. It is designed to automatically tune the parameter according to a given error bound, which can be interpreted as the distance from optimal performance. 
    \item \textbf{Sinkhorn operation rules}. Several introduced convenient operation rules for the Sinkhorn method significantly accelerate the adaptive softassign and increase the stability in Sinkhorn iterations. Furthermore, all theoretical results regarding softassign can be readily extended to the optimal transport problems \cite{cuturi2013sinkhorn}.
    \item \textbf{Graph matching algorithm.} 
By combining the adaptive softassign method with a project fixed-point approach, we propose a novel adaptive softassign matching algorithm (ASM). It enjoys significantly higher accuracy than previous state-of-the-art large matching algorithms. See Figure \ref{fig:performance} for comparison.
\end{itemize}
\begin{figure}
    \centering
    \includegraphics[width=0.99\linewidth]{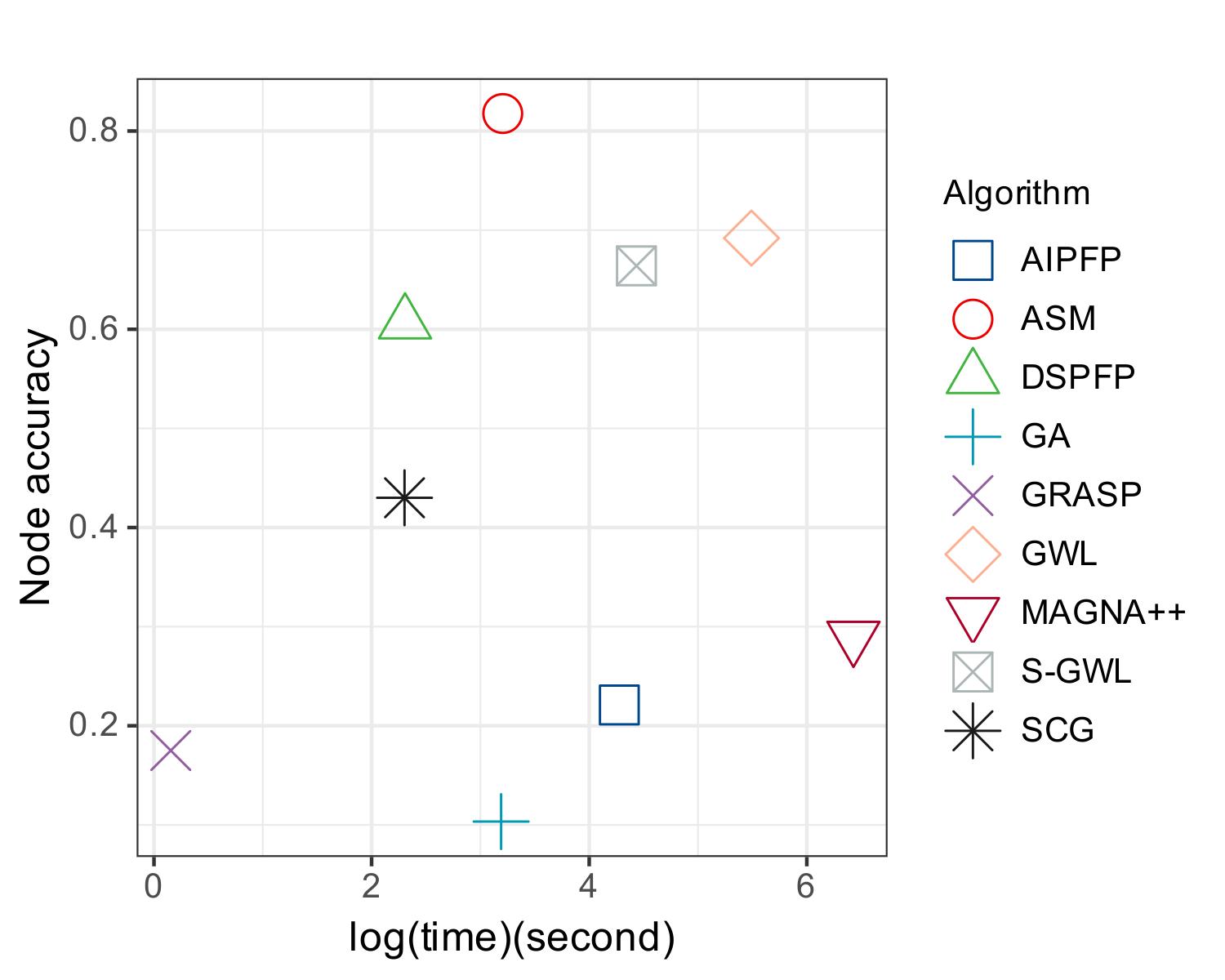}
    \caption{Mean matching accuracy and running time of different algorithms on protein network matching.}
    \label{fig:performance}
\end{figure}
The structure of this paper is as follows. Section 2 introduces the graph matching problem, the projected fixed-point method, and softassign. Section 3 showcases adaptive softassign, Sinkhorn formulas, and the potential impact of Sinkhorn formulas on the optimal transport problem. Section 4 discusses algorithmic details of adaptive softassign matching. An empirical study is conducted in Section 5 before concluding. Theoretical proofs are shown in the Appendix.

\newcommand{\bfX}{\mathbf{X}}
\newcommand{\calR}{\mathbb{R}}


\section{Preliminaries} 
Table \ref{tab:notation} summarizes the main symbols and notations used in this paper. We use lowercase letters for scalars (e.g., $\beta$), bold lowercase letters for vectors (e.g., $\mathbf{1}$), and uppercase letters for matrices (e.g., $A$).
\begin{table}[]
\centering
\caption{Symbols and Notations.}
\label{tab:notation}
\resizebox{\columnwidth}{!}{%
\begin{tabular}{cc}
\hline
Symbol                       & Definition                         \\ \hline
$\mathbb{G},\tilde{\mathbb{G}}$                 & matching graphs                   \\
$A,\tilde{A}$                          & edge attribute matrices of  $\mathbb{G}$ and $\tilde{\mathbb{G}}$                \\
$F,\tilde{F}$                          & node attribute matrices of  $\mathbb{G}$ and $\tilde{\mathbb{G}}$          \\
$n,\tilde{n}$                          & number of nodes of  $\mathbb{G}$ and $\tilde{\mathbb{G}}$                  \\
$M$                          & matching matrix                   \\
$\Pi_{n \times n}$           & set of $n \times n$ permutation matrices       \\
$\Sigma_{n \times n}$        & set of $n \times n$ doubly stochastic matrices \\\hline
$\mathbf{1}$,$ \mathbf{0}$   & a column vector of all 1s,0s      \\ 
${D}_{( {\mathbf{x} })}$ & diagonal matrix of a vector $\mathbf{x}$                      \\
$\operatorname{tr}(\cdot)$                  & trace                             \\
$\langle \cdot,\cdot\rangle$ & inner product                     \\
$\|\cdot \|_{Fro}$                    &Frobenius norm                 \\
$\exp$                       & element-wise exponential                \\

$\oslash$                    & element-wise division            \\
$\circ$                      & Hadamard product                  \\
$\cdot^{\circ}$              & Hadamard power                    \\ \hline
$\beta$                      & the parameter in softassign       \\
$\mathcal{P}_{sk}(\cdot)$    & Sinkhorn method                   \\
${S}^{\beta}_{ {X}}$     & a matrix from applying softassign with $\beta$ on a matrix $X$ \\ \hline
\end{tabular}%
}
\end{table}

\subsection{Background}
A graph $\mathbb{G}=\{\mathbb{V},\mathbb{E}, {A}, {F}\}$ consists of a node set $\mathbb{V}$ and an edge set $\mathbb{E}$. Further, we can use a symmetric matrix $A$ to denote the attributes of edges and $F$  to store the attributes of each node. 


\textbf{Matching matrix} The matching correspondence of two graphs with the same number of nodes is usually represented by a \textit{permutation matrix}  $ {M}=( {M}_{ij})$
\begin{equation}
 {M}_{i j}=\left\{\begin{array}{ll}
1 & \text { if } \mathbb{V}_i  \text { corresponds to  } \tilde{\mathbb{V}}_j, \\
0 & \text { otherwise, }
\end{array}\right.
\end{equation}
where $ {M}\in \Pi_{n \times n}.$

\textbf{The graph matching problem} can be formulated as a quadratic assignment problem minimizing the dissimilarity of two graphs \cite{zaslavskiy2008path}:
\begin{equation}
\min _{ {M} \in \Pi_{n \times n}} \frac{1}{2}\left\| {A}- {M}\widetilde{ {A}} {M}^{T}\right\|_{Fro}^{2}+\lambda\left\| {F}- {M} \widetilde{ {F}}\right\|_{Fro}^{2},
\label{eq.object 1}
\end{equation}
where the left term presents the dissimilarity between edges and the right term presents the dissimilarity between nodes. Since $\| {X}\|_{Fro}^2={\operatorname{tr}\left( {X} {X}^T\right)}$, problem \eqref{eq.object 1} can be rewritten as 
\begin{equation}
\max _{{ {M} \in \Pi_{n \times n}}}\quad \frac{1}{2} \operatorname{tr}\left({ {M}}^{T} { {A}} { {M}} {\widetilde{ {A}}}\right)+\lambda \operatorname{tr}\left({ {M}}^{T}{ {K}}\right),
\label{eq.Object_KB}
\end{equation}
where $ {K} =  {F}\tilde{ {F}}^T$, see \cite{lu2016fast} for more details. 

\textbf{Relaxation method} Due to the discrete constraints, \eqref{eq.Object_KB}  is an NP-hard problem \cite{1963The}. A common trick for solving such discrete problems is relaxation: one first finds a solution $ X$  on a continuous domain $\Sigma_{n\times n}$,
\begin{equation}
 N^* =\arg\max _{ {N} \in \Sigma_{n \times n}}\  \frac{1}{2} \operatorname{tr}\left({ {N}}^{T} { {A}} { {N}} {\widetilde{ {A}}}\right)+\lambda \operatorname{tr}\left({ {N}}^{T}{ {K}}\right),
\label{eq.Object_relaxedKB}
\end{equation}
and $N^*$ is transformed back to the original discrete domain $\Pi_{n\times n}$ by solving a \textit{linear assignment problem} of the following form
\begin{equation}
     M^* = \arg \min_{ {M}\in \Pi_{n \times n}} \| M -  {N^*}\|_{Fro}.
    \label{eq. linear assignment}
\end{equation}
The matrix $M^*$ is the final solution for graph matching, which is commonly obtained by the Hungarian method \cite{kuhn1955hungarian} or the greedy method  (efficient but not exact) \cite{luo2003spectral}.
\subsection{Adaptive projected fixed-point method}


Consider the objective function 
\begin{equation}
    \mathcal{Z} ( {M}) = \frac{1}{2} \operatorname{tr}\left({ {M}}^{T} { {A}} { {M}} {\widetilde{ {A}}}\right)+\lambda \operatorname{tr}\left({ {M}}^{T}{ {K}}\right).    \label{eq.zm}
\end{equation}
With the help of matrix differential \cite{Harville2008Matrix}, one can obtain the `gradient' of the objective function with respect to $ {M}$
\begin{equation}
    \nabla \mathcal{Z}( {M}) = \frac{\partial \mathcal{Z}( {M})}{\partial  {M}} =    {A} {M}\widetilde{ {A}}+\lambda  {K}.
\end{equation}
The adaptive projected fixed-point method is
\begin{equation}
\begin{aligned}
         {M}^{(k)} = (1&- \alpha)  {M}^{(k-1)}  + \alpha {D}^{(k)},
         \\
         {D}^{(k)} &= \mathcal{P}(\nabla \mathcal{Z}( {M}^{(k-1)})),
\end{aligned}
\label{eq.pfp}
\end{equation}
where $\alpha \in [0,1]$ is a step size parameter and $\mathcal{P}(\cdot)$ is a projection operator used to project the gradient matrix to a feasible region.

An adaptive strategy of the step size parameter proposed in \cite{shen2022dyna} can guarantee the convergence of \eqref{eq.pfp} with any projection type. The optimal step size parameter $\alpha^*$ is determined according to a 'linear search' type technique:
\begin{equation}
  (\alpha^*)^{(k)} = \arg \max_{\alpha}  \  \mathcal{Z}((1-\alpha) {M}^{(k-1)}+\alpha {D}^{(k)}).   
  \label{eq: adaptive alpha}
\end{equation}

According to underlying constraints, projections include the discrete projection used in the integer projected fixed-point method \cite{leordeanu2009integer} and the doubly stochastic projection used in the doubly stochastic projected fixed-point method (DSPFP) \cite{lu2016fast}. The discrete projection solves problem \eqref{eq. linear assignment} and the doubly stochastic projection aims to find the closet doubly stochastic matrix to a given matrix $X$ by solving
\begin{equation}
    Y^* =\arg \min_{Y \in \Sigma_{n \times n}} \|X - Y\|_{Fro},
    \label{eq.dsprojection}
\end{equation}
which equals
\begin{equation}
    Y^* =\arg \max_{Y \in \Sigma_{n \times n}} \ \langle X,Y\rangle.
    \label{eq.projection}
\end{equation}
 
In essence, the projected fixed point method solves a series of linear assignment problems to approximate the solution of problem \eqref{eq.Object_relaxedKB}. The performance of algorithms depends on the quality of solutions to linear problems (projections).

\subsection{Softassign}
Among projection methods, the discrete projection suffers from information loss when the linear assignment problem with discrete constraints has multiple solutions; the doubly stochastic projection suffers from poor convergence when the numerical value of the input matrix is large \cite{rontsis2020optimal}. 
To address these issues,  an entropic regularization term is added to smooth the problem \eqref{eq.projection}:
\begin{equation}
\begin{aligned}
\label{eq: entropic assignment}
         {S}^{\beta}_{ {X}} &= \arg \max_ {S \in \Sigma_{n \times n}} \langle  {S},  {X} \rangle + \frac{1}{\beta}\mathcal{H}( {S}),\\
        &\mathcal{H}( {S}) = -\sum  {S}_{ij} \ln { {S}_{ij}},
\end{aligned}
\end{equation}
where $ {X} = \nabla \mathcal{Z}( {M}^{(k)})$ in the projected fixed-point method for graph matching. As the inflation parameter $\beta$ increases, ${S}^{\beta}_{ {X}}$ approaches the optimal solution of the linear assignment problem \eqref{eq.projection}.

Softassign solves \eqref{eq: entropic assignment} to approximate the solution of \eqref{eq.projection} \cite{kosowsky1994invisible}. It has been widely used in graph matching \cite{shen2020fabricated, 2010Reweighted, gold1996graduated}; its general form has been widely used in optimal transport \cite{cuturi2013sinkhorn}. The solution $ {S}^{\beta}_X$ is unique of form  \cite{cuturi2013sinkhorn}
\begin{equation}
( {S}^{\beta}_X)_{ij} =  \mathbf{r}_i {J}_{ij}  \mathbf{c}_j, \ \  \  {J} = \exp(\beta  {X}), \ \ \   \mathbf{r},  \mathbf{c} \in  \mathbb{R}^n_+ .
\label{eq:softassign_exp}
\end{equation}
 In matrix form, the solution reads as
\begin{equation}
    {S}^{\beta}_{ {X}} =  {D}_{( \mathbf{r})}  {J}  {D}_{( \mathbf{c})}. 
   \label{eq:softassign_matrix}
\end{equation}
To improve numerical stability, we perform a preprocessing on $ {J}$ according to \cite{shen2022dyna}:
\begin{equation}
    \hat{ {J}}  = \exp(\beta ( {X}/\max( {X}))),
    \label{eq:softassign_infla}
\end{equation}
where $\max( {X})$ is the maximum element of $ {X}$.
The two balancing vectors $ \mathbf{r}$ and  $ \mathbf{c}$ can be computed by Sinkhorn iterations 
\begin{equation}
 \mathbf{r}^{(\ell+1)} \stackrel{\text {  }}{=}  {\mathbf{1}} \oslash ({\hat{ {J}}  \mathbf{c}^{(\ell)}}) \quad \text { and } \quad  \mathbf{c}^{(\ell+1)} \stackrel{\text {  }}{=}  {\mathbf{1}} \oslash ({\hat{ {J}}^{\mathrm{T}}  \mathbf{r}^{(\ell+1)}}).
    \label{eq:softassign_sinkhorn}
\end{equation}
To summarize, the softassign algorithm consists of two components: inflation by matrix element-wise exponential in \eqref{eq:softassign_infla} and doubly stochastic normalization by the Sinkhorn method in \eqref{eq:softassign_sinkhorn}. The inflation step magnifies large values and diminishes small ones to reduce the effect of unreliable correspondences. Figure \ref{fig_example} illustrates the effect of the $\beta$.

\begin{center}
\begin{minipage}{0.3\textwidth}\raggedright
$$
 {X}=\left(\begin{array}{ccc}
1 & 0.9 & 0.9 \\
0.9 & 1 & 0.5 \\
0.6 & 0.25 & 1
\end{array}\right)
$$
\end{minipage}
\begin{minipage}{0.45\textwidth}\raggedleft
\includegraphics[width=\linewidth]{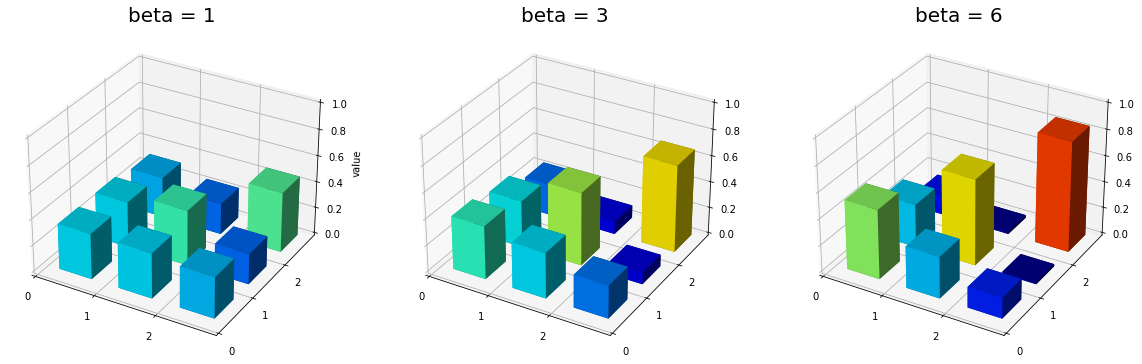}

\end{minipage}
\captionof{figure}{The heights of histograms represent values of corresponding elements in $ {S}^{\beta}_{ {X}}$. As $\beta$ increases, ${S}^{\beta}_{{X}}$ gradually converges towards the solution of the assignment problem, namely, the identity matrix.}
\label{fig_example}
\end{center}

\section{Adaptive softassign} 

This section introduces an adaptive softassign algorithm and some nice Sinkhorn operation rules. 

\subsection{Adaptive softassign}
The performance of the softassign depends on the parameter $\beta$: a larger $\beta$ leads to a better result but more Sinkhorn iterations \cite{cuturi2013sinkhorn}. Theoretically, $ {S}^{\infty}_X$ is the optimal solution for the problem \eqref{eq.projection} \cite{cominetti1994asymptotic}, while the corresponding time cost is exorbitantly high. Therefore, we aim to design an adaptive softassign that can automatically select a moderately sized $\beta$ while still yielding promising results for various large graph matching problems. Inspired by the analysis of optimal transport problems \cite{luise2018differential}, we analyze the relation between $\beta$ and optimal score to provide feasibility for the aim.
\begin{proposition}
    For a square matrix $ {X}$ and $\beta>0$, we have
    
    \begin{equation}
    \begin{aligned}
               | \langle  {S}^{\beta}_{ {X}},  {X} \rangle - \langle  {S}^{\infty}_X,  {X}\rangle | &\leq \| {S}^{\beta}_X - {S}^{\infty}_X\| \| {X}\| 
               \\\| {S}^{\beta}_X - {S}^{\infty}_X\|
               &\leq \frac{c}{\mu} (e^{(-\mu \beta)}),
    \end{aligned}
        \label{eq: assign_marginal}
    \end{equation}
where $c$ and $\mu>0$  are  constants independent of $\beta$.
\end{proposition}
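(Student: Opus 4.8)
The first inequality I would dispatch immediately, since it carries no structural content. By bilinearity of the Frobenius inner product, $\langle S^{\beta}_X, X\rangle - \langle S^{\infty}_X, X\rangle = \langle S^{\beta}_X - S^{\infty}_X, X\rangle$, and the Cauchy--Schwarz inequality then gives $|\langle S^{\beta}_X - S^{\infty}_X, X\rangle| \le \|S^{\beta}_X - S^{\infty}_X\|\,\|X\|$. Nothing beyond the inner-product structure is needed, so all the real work lives in the second inequality.

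For the second inequality the plan is to recognize the entropically regularized problem \eqref{eq: entropic assignment} as the exponential-penalty (entropic-barrier) version of the linear program \eqref{eq.projection} over the Birkhoff polytope $\Sigma_{n \times n}$, with $1/\beta$ acting as the vanishing penalty parameter, and then to invoke the asymptotic analysis of the penalty trajectory from \cite{cominetti1994asymptotic}. First I would write the optimality conditions of \eqref{eq: entropic assignment} in the scaling form \eqref{eq:softassign_matrix}, identifying $\log \mathbf{r}$ and $\log \mathbf{c}$ with the multipliers of the row and column constraints, so that each entry has the Gibbs form $(S^{\beta}_X)_{ij} = \mathbf{r}_i e^{\beta X_{ij}} \mathbf{c}_j$. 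The point $S^{\infty}_X$ is the limit of this trajectory, equal to the maximum-entropy optimizer of \eqref{eq.projection} (a permutation matrix when the optimal assignment is unique). Letting $\delta>0$ be the gap between the optimal assignment value and the best suboptimal vertex, the mass that $S^{\beta}_X$ places off the optimal support is exponentially suppressed at a rate governed by $\delta$; the trajectory expansion of \cite{cominetti1994asymptotic} converts this entrywise suppression into $\|S^{\beta}_X - S^{\infty}_X\| \le \frac{c}{\mu} e^{-\mu\beta}$, with $\mu$ proportional to $\delta$ and $c$ absorbing the geometric constants of the optimal basis, both depending only on $X$ and the polytope and hence independent of $\beta$.

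The main obstacle is upgrading this heuristic entrywise decay into a uniform bound on the full matrix distance. One must show that for large $\beta$ the trajectory enters and remains in a neighborhood where the optimal basis is correctly identified and where the log-scaling map $(\mathbf{r},\mathbf{c}) \mapsto S^{\beta}_X$ is Lipschitz, so that the $e^{-\mu\beta}$ suppression of the suboptimal entries genuinely controls $\|S^{\beta}_X - S^{\infty}_X\|$; this is precisely the delicate step carried out in \cite{cominetti1994asymptotic}, from which I would import the explicit constants. I would also flag the degenerate case: if \eqref{eq.projection} admits several optimal assignments, then $S^{\infty}_X$ must be interpreted as the entropic center of the optimal face, and the relevant rate is governed by a subdominant spectral gap rather than by $\delta$ directly.
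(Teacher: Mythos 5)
Your proposal is correct and follows essentially the same route as the paper: the first inequality is dispatched by Cauchy--Schwarz, and the second is obtained by identifying \eqref{eq: entropic assignment} with the entropic/exponential-penalty trajectory of the linear program \eqref{eq.projection} over the Birkhoff polytope and importing the exponential convergence result of \cite{cominetti1994asymptotic}, constants and the delicate uniform control included. The only cosmetic difference is the entry point into that reference: the paper uses the derivative bound $|(\dot{S}^{\beta}_X)_{ij}| \le c_0 e^{-\mu\beta}$ from the proof of its Proposition 5.1 and integrates via the fundamental theorem of calculus (which also yields Proposition 2 for free), whereas you invoke the trajectory's endpoint convergence directly.
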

Proposition 1 illustrates an exponential decay of $| \langle  {S}^{\beta}_{ {X}},  {X} \rangle - \langle  {S}^{\infty}_X,  {X}\rangle |$ with respect to $\beta$. This Proposition supports that a moderately sized $\beta$ can yield favorable outcomes. Such a $\beta$ can be determined by setting a threshold of $\| {S}^{\beta}_{ {X}} - {S}^{\infty}_{ {X}}\|$, which is a trade-off between accuracy and efficiency. However, $ {S}^{\infty}_{ {X}}$ is unknown, so we consider utilizing $\| {S}^{\beta}_{ {X}} - {S}^{\beta + \Delta \beta}_{ {X}}\|$ to determine $\beta$.  Then we analyze the convergence of $\| {S}^{\beta}_{ {X}} - {S}^{\beta + \Delta \beta}_{ {X}}\|$.

\begin{proposition}
For a square matrix $ {X}$ and  $\beta,\  \Delta \beta > 0$, we have
    \begin{equation}
        \| {S}^{\beta}_{ {X}} - {S}^{\beta+\Delta \beta}_{ {X}} \|\leq (1- e^{(-\mu \Delta \beta)}) \frac{c}{\mu} e^{(-\mu \beta)} , 
    \end{equation}
     where $c$ and $\mu>0$ are constants independent of $\beta$.
\end{proposition}
Proposition 2 indicates that $\| {S}^{\beta}_{ {X}} -  {S}^{\beta + \Delta \beta}_{ {X}}\|$ and $\| {S}^{\beta}_{ {X}} -  {S}^{\infty}_{ {X}}\|$ decay at similar order as $ \beta$ increases. This allows us to use $\beta +\Delta \beta $ instead of $\infty$ to choose a suboptimal $\beta_\epsilon$:
\begin{equation}
    \beta_{\epsilon} = \arg \min_{\beta} \beta, \ \ \ s.t. \ \| {S}^{\beta}_{ {X}} -  {S}^{\beta + \Delta \beta}_{ {X}}\| \leq \epsilon. \label{eq:beta_epsilon}
\end{equation}
The pseudocode for adaptive softassign appears in the Algorithm \ref{ag: adaptive softassign}.

\begin{figure}
    \centering
    \includegraphics[width=0.45\textwidth]{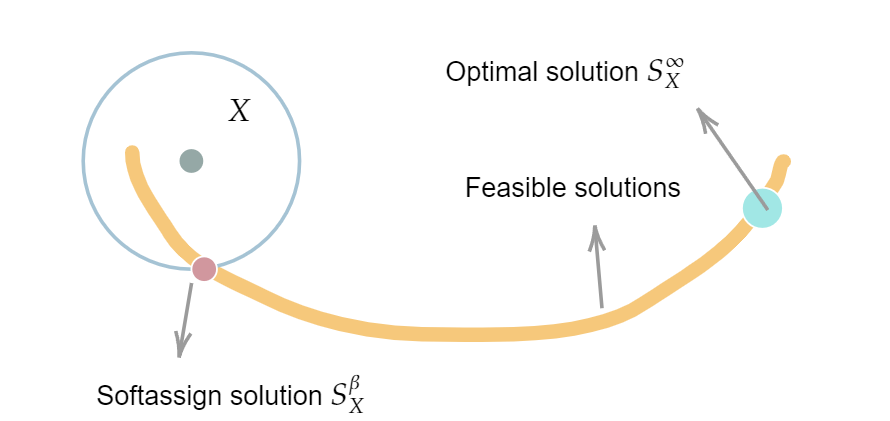}
    \includegraphics[width=0.45\textwidth]{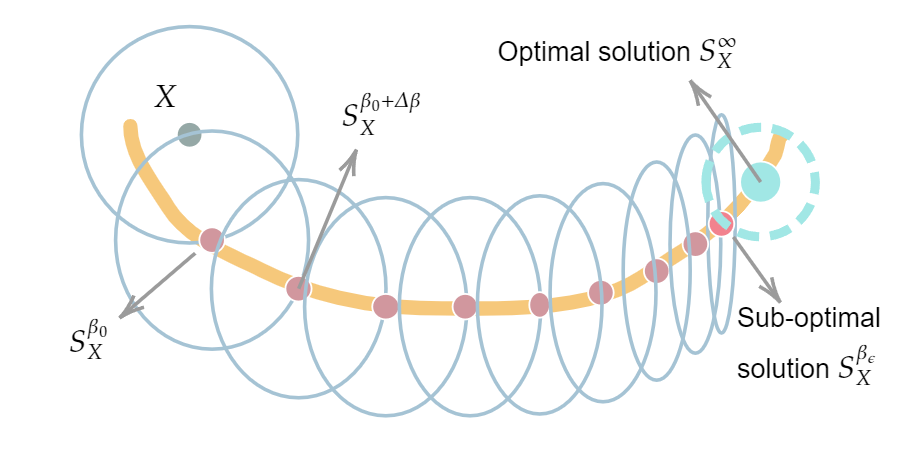}
    \caption{Softassign and adaptive softassign process.}
    \label{fig:Adaptive softassign}
\end{figure}

\textbf{On choosing of $\Delta \beta$}
\ \ \ \citet{altschuler2017near} and \citet{shen2022dyna} demonstrate that softassign can be robust to the nodes' cardinality $n$ by setting $\beta = \gamma \ln(n)$, where $\gamma$ is a constant related with the type of matching graphs. Enlightened by this, we also set $\Delta \beta = \ln(n)$ so that the adaptive softassign is robust to $n$.

\textbf{On choosing of $\beta_0$}
\ Empirical evidence suggests that the computational time required for adaptive softassign positively correlates with $|\beta_\epsilon -\beta_0|$. Therefore, choosing a $\beta_0$,  close to $\beta_{\epsilon}$, can enhance the algorithm's efficiency. The choice of $\beta_0$ for graph matching is discussed in detail in the subsequent section, as $\beta_{\epsilon}$ varies across different problems.

\textbf{Error analysis} \  Since the adaptive softassign has the same accuracy as softassign with $\beta_{\epsilon}$, the performance of adaptive softassign is guaranteed by \cite{shen2022dyna}: 
\begin{equation}
    \frac{1}{n}|\langle S_X^{\beta_{\epsilon}}, X\rangle-\left\langle S_X^{\infty}, X\right\rangle| \leq \frac{\ln(n)}{\beta_{\epsilon}} =\frac{1}{\gamma_{\epsilon}},
\end{equation}
where the left term, an \textit{average assignment error},  quantifying the distance between $S_X^{\beta_{\epsilon}}$ and the optimal solution $S_X^{\infty}$.

\begin{algorithm}[H]
    \centering
    \caption{Adaptive softassign}\label{alg: AS}
    \footnotesize
    \begin{algorithmic}[1]
            \Require $ {X}, \beta_0, \epsilon$     
                 \State Compute $ {S}_{ {X}}^{\beta_0}$ by softassign in \eqref{eq:softassign_infla} and \eqref{eq:softassign_sinkhorn} 
                 \For{$k=0,1,2 \dots, $ until $r<\epsilon$ } 
                    \State $\beta_k = \beta_{k-1}+\Delta \beta$
                    \State Compute $ {S}_{ {X}}^{\beta_k}\ \ \ $ (Accelerated by Alg. 2)
                    \State $r = \| {S}_{ {X}}^{\beta_k} -  {S}_{{X}}^{\beta_{k-1}}\|_1$
                \EndFor
                \State \textbf{Return} {$ {S}_{ {X}}^{\beta_k},\  \beta_{k}$}
    \end{algorithmic}
   \label{ag: adaptive softassign}
\end{algorithm}

\begin{algorithm}[H]
    \centering
    \caption{Softassign Transition}\label{algorithm1}
    \footnotesize
    \begin{algorithmic}[1]
    \Require $ {S}_{}^{\beta_{k-1}}, \beta_{k-1},\beta_{k}$
        \State $\hat{ {S}}=( {S}_{}^{\beta_{k-1}})^{\circ(\frac{\beta_{k}}{\beta_{k-1}})}$
        \For{$\ell=0,1,2 \dots, $ until convergence}
        \State $ \mathbf{r}^{(\ell+1)} \stackrel{\text {  }}{=}  \mathbf{1} \oslash \hat{ {S}}  \mathbf{c}^{(\ell)}$ 
        \State $ \mathbf{c}^{(\ell+1)} =  \mathbf{1} \oslash \hat{ {S}}^T  \mathbf{r}^{(\ell+1)}$
        \EndFor
        \State \textbf{Return}  $ {S}^{\beta_{k}} =  {D}_{( \mathbf{r})}\hat{ {S}} {D}_{( \mathbf{c})}$
    \end{algorithmic}
    \label{ag.transition}
\end{algorithm}
\subsection{Softassign Transition}

Since the adaptive softassign inevitably compute $ {S}^{\beta+ \Delta \beta}_{X}$ for different $\beta$ repeatedly, we propose a delicate strategy to compute $ {S}^{\beta + \Delta \beta}_{ X}$ from $ {S}^{\beta}_{ X}$ instead of $ X \in \mathbb{R}^{n \times n}$. This recursive computation is much easier than direct computation. The process is shown in Figure \ref{fig:Adaptive softassign}.

To achieve the recursive computation, we first propose some nice Sinkhorn formulas. For convenience, we use $\mathcal{P}_{sk}{( {X})}$ to represent $Sinkhorn( {X}) = {D}_{( \mathbf{r})} {X} {D}_{( \mathbf{c})}$ where $ \mathbf{r}$ and $ \mathbf{c} \in \mathbb{R}^n_+$ are balancing vectors resulting from \eqref{eq:softassign_sinkhorn}.

\begin{proposition} {Hadamard-Equipped Sinkhorn}\\
Let $ {X} \in \mathbb{R}^{n \times n}_+$, then
\begin{equation}
    \mathcal{P}_{sk}( {X}) ={X} \circ  {SK}^{( {X})}= {X} \circ ( \mathbf{r}\otimes  \mathbf{c}^T)
\end{equation}
where $ {SK}^{( {X})} \in \mathbb{R}^{n \times n}$  is unique, $ \mathbf{r}$ and $ \mathbf{c} \in \mathbb{R}^n_+$ are balancing vectors so that ${D}_{( \mathbf{r})} {X} {D}_{( \mathbf{c})}$ is doubly stochastic. 
\label{pro.hadamard}
\end{proposition}
This Proposition builds a bridge between the Hadamard product and the Sinkhorn method. The connection yields some convenient Sinkhorn operation rules.
\begin{lemma}
    Let $X \in \mathbb{R}^{n \times n}_+$, $ \mathbf{u}$ and $ \mathbf{v} \in \mathbb{R}^n_+$, then
    \begin{equation}
        \mathcal{P}_{sk}(X) = \mathcal{P}_{sk}(X\circ(\mathbf{u} \otimes  \mathbf{v}^T)).
    \end{equation}
\end{lemma}

\begin{lemma}{Sinkhorn-Hadamard product}\\
Let $ {X}_1,  {X}_2 \in \mathbb{R}^{n \times n}_+$,  then
\begin{equation}
        \mathcal{P}_{sk}( {X}_1 \circ  {X}_2) = \mathcal{P}_{sk}(\mathcal{P}_{sk}( {X}_1) \circ  {X}_2).
\end{equation}

\end{lemma}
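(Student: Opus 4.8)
The plan is to collapse the right-hand side onto the left-hand side by expanding the inner Sinkhorn as a Hadamard rescaling and then discarding that rescaling via Lemma 1. The key observation is that applying $\mathcal{P}_{sk}$ to $X_1$ only multiplies $X_1$ entrywise by a rank-one positive factor, and such factors are precisely what the Sinkhorn normalization is invariant to.

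First, I would invoke Proposition~\ref{pro.hadamard} to write $\mathcal{P}_{sk}(X_1) = X_1 \circ (\mathbf{r}^T \otimes \mathbf{c})$, where $\mathbf{r}, \mathbf{c} \in \mathbb{R}^n_+$ are the balancing vectors attached to $X_1$. Substituting this into the inner argument of the right-hand side and using that the Hadamard product is commutative and associative entrywise, I obtain $\mathcal{P}_{sk}(X_1) \circ X_2 = (X_1 \circ X_2) \circ (\mathbf{r}^T \otimes \mathbf{c})$. Since $X_1$ and $X_2$ have strictly positive entries and $\mathbf{r}, \mathbf{c}$ are positive, this matrix again lies in $\mathbb{R}^{n\times n}_+$, so every subsequent Sinkhorn operation is well defined.

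Next I would apply Lemma 1 with the matrix $X := X_1 \circ X_2$ and the vectors $\mathbf{u} := \mathbf{r}$, $\mathbf{v} := \mathbf{c}$. Lemma 1 states that Hadamard multiplication by a rank-one positive matrix $(\mathbf{u}^T \otimes \mathbf{v})$ — equivalently, a left/right diagonal rescaling — leaves the Sinkhorn output unchanged, so $\mathcal{P}_{sk}\big((X_1 \circ X_2) \circ (\mathbf{r}^T \otimes \mathbf{c})\big) = \mathcal{P}_{sk}(X_1 \circ X_2)$. Chaining the two equalities gives $\mathcal{P}_{sk}(\mathcal{P}_{sk}(X_1) \circ X_2) = \mathcal{P}_{sk}(X_1 \circ X_2)$, which is the claim.

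I expect no serious obstacle, since the argument is a direct composition of the two earlier results. The only points demanding care are bookkeeping ones: confirming that the rank-one factor produced by Proposition~\ref{pro.hadamard} has exactly the form $(\mathbf{u}^T \otimes \mathbf{v})$ required by Lemma 1, and checking that positivity is preserved through the Hadamard products so that Lemma 1 remains applicable. The conceptual crux — already encapsulated by the two preceding results — is that the first normalization contributes only a diagonal rescaling, to which the Sinkhorn map is blind.
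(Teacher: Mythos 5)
Your proof is correct and follows essentially the same route as the paper's: both expand $\mathcal{P}_{sk}(X_1)$ via the Hadamard-Equipped Sinkhorn proposition as $X_1 \circ (\mathbf{r}^T \otimes \mathbf{c})$, fold that rank-one factor into $X_1 \circ X_2$, and then discard it using Lemma 1's invariance of the Sinkhorn map under such rescalings. Your additional remarks on positivity and on the form of the rank-one factor are sound bookkeeping that the paper leaves implicit.
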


\begin{lemma}{Sinkhorn-Hadamard power}\\
Let $ {X} \in \mathbb{R}^{n \times n}_+$,  
then 
\begin{equation}
      \mathcal{P}_{sk}( {X}^{\circ (ab)}) =\mathcal{P}_{sk}(\mathcal{P}_{sk}( {X}^{\circ a})^{\circ b}),  
\end{equation}
where $a$ and $b$ are two constants not equal to zero.
\end{lemma}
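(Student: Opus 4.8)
The plan is to derive the right-hand side from the left using only Proposition \ref{pro.hadamard} (Hadamard-Equipped Sinkhorn) and Lemma 1, with no fresh Sinkhorn convergence argument. The guiding observation is that raising a Sinkhorn output to a Hadamard power perturbs it only by a \emph{rank-one} Hadamard factor, and Lemma 1 says exactly such a factor is invisible to $\mathcal{P}_{sk}$. Thus the whole argument reduces to peeling off that rank-one factor by bookkeeping, rather than re-analyzing the iteration.

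Concretely, I would first apply Proposition \ref{pro.hadamard} to $X^{\circ a}\in\mathbb{R}^{n\times n}_+$, obtaining balancing vectors $\mathbf{r},\mathbf{c}\in\mathbb{R}^n_+$ with
\[
\mathcal{P}_{sk}(X^{\circ a}) = X^{\circ a}\circ(\mathbf{r}^T\otimes\mathbf{c}).
\]
Taking the Hadamard $b$-th power of both sides, using that Hadamard power distributes over the Hadamard product together with $(X^{\circ a})^{\circ b}=X^{\circ(ab)}$, gives
\[
\mathcal{P}_{sk}(X^{\circ a})^{\circ b} = X^{\circ(ab)}\circ(\mathbf{r}^T\otimes\mathbf{c})^{\circ b}.
\]
The key algebraic identity is that the Hadamard power of a rank-one outer product is again a rank-one outer product, $(\mathbf{r}^T\otimes\mathbf{c})^{\circ b}=(\mathbf{r}^{\circ b})^T\otimes\mathbf{c}^{\circ b}$, which follows entrywise from $(\mathbf{r}_i\mathbf{c}_j)^b=\mathbf{r}_i^{\,b}\mathbf{c}_j^{\,b}$.

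Setting $\mathbf{u}=\mathbf{r}^{\circ b}$ and $\mathbf{v}=\mathbf{c}^{\circ b}$, which remain in $\mathbb{R}^n_+$ since $\mathbf{r},\mathbf{c}>0$ and $b\neq 0$, I would then apply $\mathcal{P}_{sk}$ and invoke Lemma 1 with base matrix $X^{\circ(ab)}$:
\[
\mathcal{P}_{sk}\!\left(\mathcal{P}_{sk}(X^{\circ a})^{\circ b}\right)=\mathcal{P}_{sk}\!\left(X^{\circ(ab)}\circ(\mathbf{u}^T\otimes\mathbf{v})\right)=\mathcal{P}_{sk}(X^{\circ(ab)}),
\]
which is exactly the claim. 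The main point to watch — more a care than a genuine obstacle — is preserving strict positivity throughout so that Lemma 1's hypotheses hold: this is where $b\neq 0$ enters, guaranteeing $\mathbf{u},\mathbf{v}\in\mathbb{R}^n_+$ and that $X^{\circ(ab)}$ stays in the positive orthant on which the balancing vectors of Proposition \ref{pro.hadamard} exist and are unique.
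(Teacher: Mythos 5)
Your proposal is correct and follows essentially the same route as the paper's own proof: apply the Hadamard-Equipped Sinkhorn proposition to $X^{\circ a}$, distribute the Hadamard $b$-th power to get $X^{\circ(ab)}$ times a rank-one factor, and absorb that factor via Lemma 1. You simply make explicit two steps the paper leaves implicit — the identity $(\mathbf{r}^T\otimes\mathbf{c})^{\circ b}=(\mathbf{r}^{\circ b})^T\otimes\mathbf{c}^{\circ b}$ and the positivity bookkeeping that justifies invoking Lemma 1 — which is a welcome clarification but not a different argument.
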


According to the Lemma 1 and Lemma 2, we have
\begin{theorem}{Softassign Transition}\\
Let $ X \in \mathbb{R}^{n \times n}_{+}$,  then
    \begin{equation}
         {S}^{\beta_2}_ X = \mathcal{P}_{sk}(( {S}^{\beta_1}_ X)^{\circ( \frac{\beta_2}{\beta_1})}) , \ where \ \beta_1,\beta_2 > 0.
         \label{eq:transition}
    \end{equation}
\end{theorem}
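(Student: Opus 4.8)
The plan is to reduce everything to Sinkhorn normalizations of a single fixed positive matrix and then invoke the operation rules already established. First I would record the basic fact, immediate from \eqref{eq:softassign_exp}--\eqref{eq:softassign_matrix}, that softassign is nothing but Sinkhorn applied to an exponentiated matrix:
\begin{equation*}
S^{\beta}_X = D_{(\mathbf{r})}\exp(\beta X) D_{(\mathbf{c})} = \mathcal{P}_{sk}(\exp(\beta X)).
\end{equation*}
Setting $Y = \exp(X) \in \mathbb{R}^{n\times n}_+$, the element-wise identity $\exp(\beta X_{ij}) = Y_{ij}^{\beta}$ gives $\exp(\beta X) = Y^{\circ \beta}$, so that $S^{\beta}_X = \mathcal{P}_{sk}(Y^{\circ \beta})$ for every $\beta > 0$. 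This single reformulation is what lets the Hadamard-power machinery act.

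With this in hand, the target identity becomes $\mathcal{P}_{sk}(Y^{\circ \beta_2}) = \mathcal{P}_{sk}\bigl(\mathcal{P}_{sk}(Y^{\circ \beta_1})^{\circ(\beta_2/\beta_1)}\bigr)$. I would now apply the Sinkhorn--Hadamard power rule (Lemma 3) with the factorization $\beta_2 = \beta_1 \cdot (\beta_2/\beta_1)$, i.e.\ $a = \beta_1$ and $b = \beta_2/\beta_1$, both nonzero since $\beta_1, \beta_2 > 0$. Lemma 3 directly yields $\mathcal{P}_{sk}(Y^{\circ(ab)}) = \mathcal{P}_{sk}\bigl(\mathcal{P}_{sk}(Y^{\circ a})^{\circ b}\bigr)$, and substituting $\mathcal{P}_{sk}(Y^{\circ \beta_1}) = S^{\beta_1}_X$ on the right and $\mathcal{P}_{sk}(Y^{\circ \beta_2}) = S^{\beta_2}_X$ on the left closes the argument.

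If instead one wants to see the mechanism purely through Lemmas 1 and 2 (as the text announces), I would argue as follows. By Proposition \ref{pro.hadamard}, $S^{\beta_1}_X = \exp(\beta_1 X) \circ (\mathbf{r}^T \otimes \mathbf{c})$ for positive balancing vectors $\mathbf{r}, \mathbf{c}$. Since Hadamard power distributes over Hadamard product, raising to the power $\beta_2/\beta_1$ and using $[\exp(\beta_1 X)]^{\circ(\beta_2/\beta_1)} = \exp(\beta_2 X)$ element-wise gives
\begin{equation*}
(S^{\beta_1}_X)^{\circ(\beta_2/\beta_1)} = \exp(\beta_2 X) \circ \bigl((\mathbf{r}^{\circ(\beta_2/\beta_1)})^T \otimes \mathbf{c}^{\circ(\beta_2/\beta_1)}\bigr).
\end{equation*}
The trailing factor is again of the form $\mathbf{u}^T \otimes \mathbf{v}$ with $\mathbf{u}, \mathbf{v} \in \mathbb{R}^n_+$, so Lemma 1 strips it off without changing the Sinkhorn output, leaving $\mathcal{P}_{sk}(\exp(\beta_2 X)) = S^{\beta_2}_X$.

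The main thing to watch is not a deep obstacle but a bookkeeping one: verifying that a Hadamard power of a positive rank-one factor $\mathbf{r}^T \otimes \mathbf{c}$ is again a positive rank-one factor (so that Lemma 1 applies) and that positivity of all entries is preserved throughout, which is what legitimizes both the exponential-to-power identity and the invocation of the operation rules on $\mathbb{R}^{n\times n}_+$. Once the reformulation $S^{\beta}_X = \mathcal{P}_{sk}(Y^{\circ \beta})$ is in place, the statement is essentially a one-line consequence of Lemma 3, so the real content lies in that reformulation rather than in any subsequent computation.
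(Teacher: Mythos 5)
Your proof is correct and takes essentially the same route as the paper: the paper obtains the transition theorem directly from its Hadamard--Sinkhorn operation rules, and your reduction $S^{\beta}_X = \mathcal{P}_{sk}(\exp(X)^{\circ \beta})$ followed by Lemma 3 with $a=\beta_1$, $b=\beta_2/\beta_1$ is exactly that derivation, while your second route (Proposition \ref{pro.hadamard} plus Lemma 1) is just the paper's proof of Lemma 3 unrolled in this special case. The only discrepancy is cosmetic: the main text credits ``Lemma 1 and Lemma 2,'' but as your argument makes clear, what is really invoked is the Hadamard-power rule (Lemma 3), itself a consequence of Lemma 1.
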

The softassign transition enables us to compute ${S}_ X^{\beta+\Delta \beta}$ from $ {S}_ X^{\beta}$, which significantly reduces the computational cost. The strategy is detailed in Algorithm \ref{ag.transition}. Its performance is displayed in Figure \ref{fig: Softassign pro}. When the matrix size is 2000, the speedup ratio of the strategy is 6.7x.

\begin{figure}[h]
    \centering
    \includegraphics[width=0.35\textwidth]{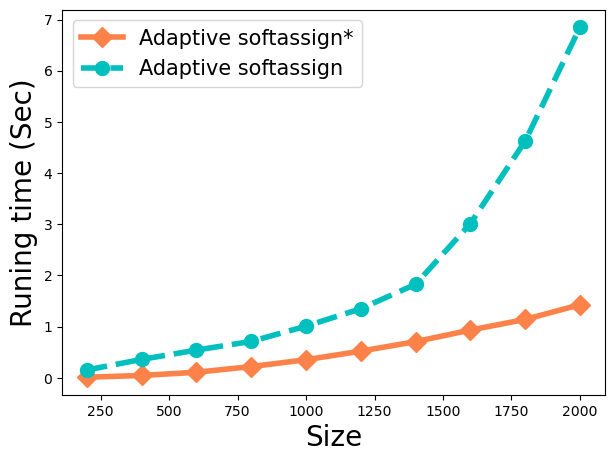}
    \caption{The orange solid line represents the performance of adaptive softassign; the blue dashed line represents the performance of adaptive softassign* (adaptive softassign with the softassign transition). These two methods are evaluated on random matrices over 20 runs.}
    \label{fig: Softassign pro}
\end{figure}

\subsection{Stability}
For a large $\beta$, the computation of softassign may cause  
numerical instability. The instability includes (1) overflow: the elements of $J$ in \eqref{eq:softassign_exp} are too large to handle, and (2) underflow: a row/column sum of $\hat{J}$ approaches to 0 in \eqref{eq:softassign_infla}, then a denominator of zero occurs in the Sinkhorn process \eqref{eq:softassign_sinkhorn} \cite{xie2020fast}. Adaptive softassign can significantly reduce such a risk since it calculates the $S^{\beta_\epsilon}_X$ by $S^{\beta_0}_X$ and a series of softassign transitions. It is akin to dividing a vast distance into smaller segments, thereby enabling one to traverse the distance step by step.

\begin{example} How adaptive softassign avoids instability when finding $S_X^8$ for 
\begin{equation}
    {X}=\left(\begin{array}{cc}
	 -99&  -100 \\
	-100 & -99  
	\end{array}\right).
 \end{equation}
Calculating the $S_X^{8}$ directly will cause instability: $\exp(-99\times 8)$ and $\exp(-100\times 8)$ are smaller than the smallest number that a program can handle, so the program rounds down $\exp(8X)$ to a zero matrix.

A stable choice is computing it by a two-step computation:
\begin{equation}
    S_X^{8} = \mathcal{P}_{sk}((S_X^{2})^{\circ 4})\  or \ \mathcal{P}_{sk}((S_X^{4})^{\circ 2}).
\end{equation}
We show the results as follows:
$$
    S_X^{4} = \left(\begin{array}{cc}
	0.98 & 0.02  \\
	0.02 & 0.98  
	\end{array}\right), \mathcal{P}_{sk}((S_X^{4})^{\circ 2})= \left(\begin{array}{cc}
	0.997 & 0.003  \\
	0.003 & 0.997  
	\end{array}\right)
$$
$$
    S_X^{2} = \left(\begin{array}{cc}
	0.88 & 0.12 \\
	0.12& 0.88 
	\end{array}\right), \mathcal{P}_{sk}((S_X^{2})^{\circ 4})= \left(\begin{array}{cc}
	0.997 & 0.003  \\
	0.003 & 0.997  
	\end{array}\right)
$$
The results also validate the softassign transition that $\mathcal{P}_{sk}((S_X^{2})^{\circ 4})=\mathcal{P}_{sk}((S_X^{4})^{\circ 2}).$ The risk of overflow can also be addressed by this method.
\end{example}

\subsection{Connection with the optimal transport problem}
The Sinkhorn formulas, introduced in Section 3.2, are closely related to the optimal transport problem.
\citet{cuturi2013sinkhorn} formulates the regularized optimal transport problem as 
\begin{equation}
    \mathrm{T}_{ {C}}^{\beta}(\mathbf{a}, \mathbf{b}) {=} \arg \min _{ {T} \in \mathcal{U}(\mathbf{a}, \mathbf{b})}\langle {T},  {C}\rangle- \frac{1}{\beta} \mathcal{H}( {T}), 
    \label{eq:OT}
\end{equation}
where $\mathcal{U}_{(\mathbf{a}, \mathbf{b})}:=\left\{ {T} \in \mathbb{R}_{+}^{n \times n}:  {T} \mathbf{1}=\mathbf{a},  {T}^T \mathbf{1}=\mathbf{b}\right\}$, $ {C} \in \mathbb{R}_{+}^{n \times n}$ is a given cost matrix, and $\mathbf{a},\ \mathbf{b} \in \mathbb{R}_+^{n}$ are given vectors with positive entries with the sum being one. The regularized linear assignment problem \eqref{eq: entropic assignment} is a special case of the regularized optimal transport problem where $\mathbf{a}$ and $\mathbf{b}$ are vectors of ones. The solution of \eqref{eq:OT} has the form 
\begin{equation}
    \mathrm{T}_{ {C}}^{\beta}(\mathbf{a}, \mathbf{b}) =  {D}_{(\mathbf{u} )} \exp(-\beta \mathbf{C})  {D}_{(\mathbf{v} )},
    \label{eq:OT_solution}
\end{equation}
where $\mathbf{v}$ and $\mathbf{u}$ can be  computed by the Sinkhorn iteration. The form of \eqref{eq:OT_solution} is very similar to the solution of the regularized assignment problem in \eqref{eq:softassign_matrix}. According to Proposition \ref{pro.hadamard}, we have $\mathrm{T}_{ {C}}^{\beta}(\mathbf{a}, \mathbf{b}) =\exp(-\beta  {C}) \circ (\mathbf{u} \otimes \mathbf{v}^T)$, and the $\mathbf{u}$ and $\mathbf{v}$ are unique \cite{cuturi2013sinkhorn}. This property makes it easy to prove Lemma 1, Lemma 2, Lemma 3, and the transition theorem for optimal transport problems. Such theoretical results will provide more flexibility for computation and shed light on optimal transport problems. For instance, \citet{liao2022fast1,liao2022fast2} enhance the Sinkhorn method in special optimal transport problems by leveraging Hadamard operations (which differs from our Sinkhorn formulas). Another interesting finding based on the Sinkhorn formulas is that adaptive softassign is a variant of the proximal point method for optimal transport problems (described in the Appendix).

\section{The adaptive softassign matching algorithm}

The adaptive softassign matching algorithm\footnote{Our codes are available at https://github.com/BinruiShen/Adaptive-Softassign-Matching.} is shown in Algorithm \ref{ag.asm}.  In step 1, a uniform initialization approach is adopted when no prior information is available. For the problem of matching graphs with different numbers of nodes (we assume that $\tilde{n} \leq n$), \citet{gold1996graduated} introduce a square \textit{slack matrix} like $ \hat{D}$ in step 5: $ \hat{D}_{(1:n,1:\tilde{n})} = {A} {N} \tilde{ {A}}+\lambda  {K}$ and rest elements of $ \hat{D}$ are zero. Discussion in \cite{lu2016fast} indicates that matching quality is not sensitive to the parameter $\lambda$, and we set $\lambda=1$ follows \cite{lu2016fast}. In step 7, we utilize $\beta^{(k)}_{\epsilon}- \Delta \beta$ as the $\beta^{(k+1)}_0$ to reduce the computational cost of the adaptive softassign in the next iterate: such a $\beta^{(k+1)}_0$ is close to $\beta^{(k+1)}_{\epsilon}$, since $\beta_\epsilon$ typically increases in early iterations of the algorithm before leveling off at a certain point with minor fluctuations (see Figure \ref{fig:gamma_change}). It should be noted that $\beta^{(k)}_{\epsilon} \geq \beta^{(k)}_0 + \Delta \beta$ according to Algorithm \ref{ag: adaptive softassign}, which indicates that  $\{\beta^{(k)}\}$ will inevitably be an increasing series if $\beta^{(k+1)}_0 = \beta^{(k)}_{\epsilon}$. The discretization in Step 10 is completed by the Hungarian method \cite{kuhn1955hungarian}. 
\begin{figure}
    \centering
    \includegraphics[width=0.95\linewidth]{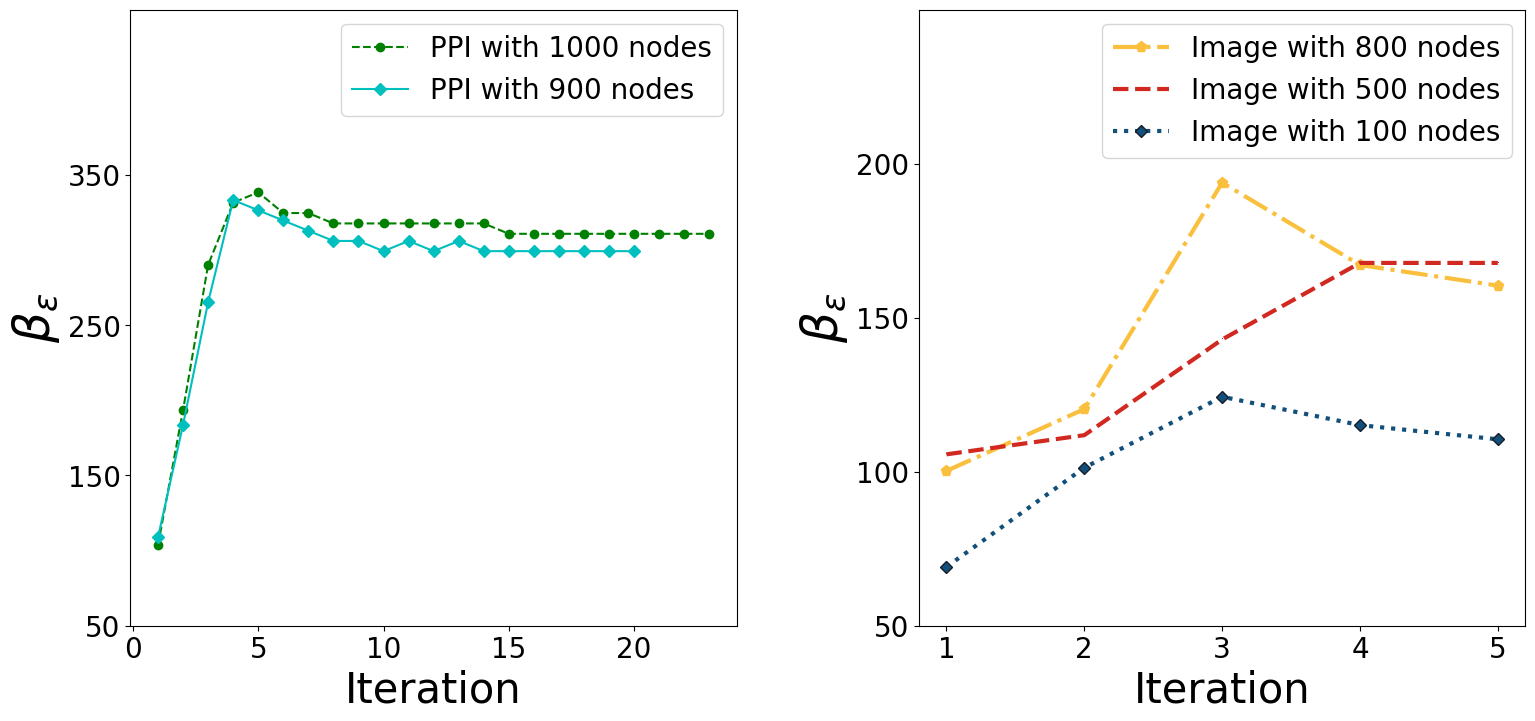}
    \caption{The change of $\beta_\epsilon$ in ASM when $\beta_0$  is $\ln n$ in adaptive softassign. PPI and image are two kinds of graph matching tasks introduced in experiments.}
    \label{fig:gamma_change}
\end{figure}

Regardless of fast and sparse matrix computation, step 4 and step 5 entail $O(n^3)$ operations per iteration. In the matching process, $\beta_{\epsilon}$ in the adaptive softassign (step 6) includes an increasing and stable state, shown in Figure \ref{fig:gamma_change}. In stable state, the cost of adaptive softassign is close to that of softassign with $\beta_{\epsilon}^m$, where $\beta_{\epsilon}^m$ is the maximum of $\beta_{\epsilon}$ in the matching process. In the increasing state, the cost of adaptive softassign is less than that of softassign with $\beta_{\epsilon}^m$. Therefore, the average cost of adaptive softassign in a matching process is close to the softassign with $\beta_{\epsilon}^m$: $O({n^2\beta_{\epsilon}^m\|{X}\|_{\infty}})$  where the maximum of ${X}$ is 1 \cite{luo2023improved}. Step 8 requires $O(n^2)$ operations per iteration. The Hungarian algorithm completes the final discretization step \cite{kuhn1955hungarian} with complexities of $O(n^3)$. Thus, the algorithm has time complexity $O(n^3)+O({n^2\beta_{\epsilon}^m\|{X}\|_{\infty}})$ per iteration and space complexity $O(n^2)$.


\begin{CJK*}{UTF8}{gkai}
    \begin{algorithm}[h]
        \caption{ $\,$ Adaptive softassign matching (ASM)}
        \begin{algorithmic}[1] 
            \Require $ {A},\tilde{ {A}}, {K},\lambda$
            \Ensure $ {M}$       
                 \State Initial  $ \Delta\beta = \ln n, {N}^{(0)} =  {(\mathbf{\frac{1}{n})}}_{n \times \tilde{n}},  \hat{D}^{(0)}= \mathbf{0}_{n \times n}$ 
                 \State $\beta^{({1})}_0 =\Delta\beta$
                 \For{$k=1,2 \dots, $ until ${N}$ converge}
                 \State Compute optimal $\alpha$ by \eqref{eq: adaptive alpha}     
               \State $ \hat{{D}}^{(k)}_{(1:n,1:\tilde{n})} = {A} {N}^{(k-1)} \tilde{ {A}}+\lambda  {K}$
                 \State $[ {D}^{(k)},\  \beta_{\epsilon}^{(k)}]=$ Adaptive  softassign$( \hat{D}^{(k)},\ \beta^{(k)}_0)$

                 \State $\beta^{(k+1)}_0 = \beta_{\epsilon}^{(k)} - \Delta\beta$
                    \State $ {N}^{(k)} =(1-\alpha) {N}^{(k-1)}+\alpha  {D}^{(k)}_{(1:n,1:\tilde{n})}$
                \EndFor
                \State Discretize ${N}$ to ${M}$
                \State \Return{$ {M}$}
        \end{algorithmic}
        \label{ag.asm}
    \end{algorithm}
\end{CJK*}

\section{Experiments}
\textbf{Baselines} We compare ASM against the following baselines: DSPFP \cite{lu2016fast}, GA \cite{gold1996graduated}, AIPFP \cite{leordeanu2009integer,lu2016fast}, SCG \cite{shen2022dyna}, GWL\footnote{https://github.com/HongtengXu/gwl} \cite{xu2019gromov} , S-GWL\footnote{https://github.com/HongtengXu/s-gwl} \cite{xu2019scalable}, MAGNA++\footnote{https://www3.nd.edu/~cone/MAGNA++/} \cite{vijayan2015magna++}, and GRASP\footnote{https://github.com/AU-DIS/GRASP} \cite{hermanns2023grasp}.

\noindent \textbf{Benchmarks} We perform algorithms in three benchmarks: the protein-protein interaction network (PPI), Facebook social networks, and real images. Unweighted graphs represent the first two networks. Weighted graphs with attributed nodes are extracted from real images. 

\noindent \textbf{Evaluations}
The evaluation in PPI and the social network is node accuracy $\frac{n_c}{n}$ where $n_c$ represents the number of correct matching nodes. Since the ground truth of matching on real images is unknown, we evaluate the algorithms by matching error 
\begin{equation}
    \frac{1}{4}\left\| {A}- {M}\widetilde{ {A}} {M}^{T}\right\|_{Fro}^{2}+\left\| {F}- {M} \widetilde{ {F}}\right\|_{Fro}^{2}.
    \label{eq:error_fun}
\end{equation}
The first four baselines can adapt the \eqref{eq:error_fun} as the objective function. Other algorithms are not compared in real image experiments since they are not designed to solve matching problems with attributed nodes. 
\begin{figure*}[h]
    \centering
    \includegraphics[width=0.9\textwidth]{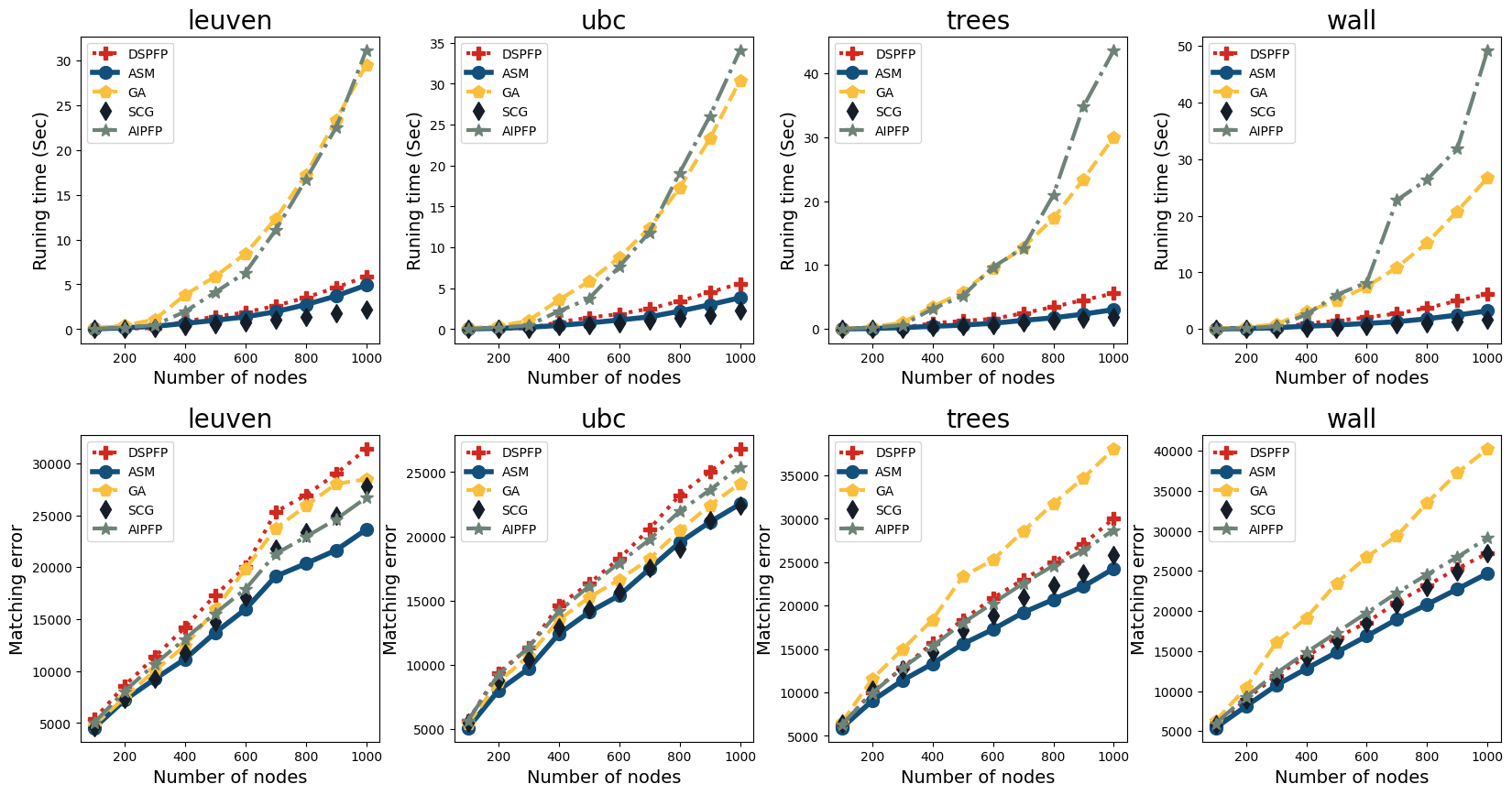}{}

    \caption{Comparision between algorithms in four graph pairs.}
    \label{fig: real image}
\end{figure*}
\subsection{Protein network and Social network}
The yeast's protein-protein interaction (PPI) networks contain 1,004 proteins and 4,920 high-confidence interactions\footnotemark[4]. The social network comprising 'circles' (or 'friends lists') from Facebook \cite{snapnets} contains 4039 users (nodes) and 88234 relations (edges). Following the experimental protocol of \cite{xu2019scalable}, we compare different methods on matching networks with $5\%$, $15\%$ and $25\%$ noisy versions. Table \ref{tab:PPI} and Table \ref{tab:facebook} list the performance of various methods. The ASM consistently attains the highest accuracy across all scenarios, demonstrating its robustness. Notably, it yields an approximate $20\%$ enhancement in accuracy amidst a $25\%$ noise level, further accentuating its efficacy. Compared to the suboptimal algorithm GWL, ASM showcases an efficiency improvement of approximately tenfold.

\begin{table}[ht]
\centering
\caption{Comparisons on yeast PPI}
\label{tab:PPI}
\resizebox{\columnwidth}{!}{%
\begin{tabular}{c|ccccccc}
\hline
Yeast network & \multicolumn{2}{c}{5\% noise} & \multicolumn{2}{c}{15\% noise} & \multicolumn{2}{c}{25\% noise} \\ \hline
Methods & Node Acc & time  & Node Acc & time  & Node Acc & time  \\ \hline
MAGNA++ \cite{vijayan2015magna++} & 48.3\%   & 603.3s   & 25.0\%  & 630.6s   & 13.6\%  & 624.2s   \\
S-GWL \cite{xu2019scalable}   & 81.3\%   & 82.3s    & 62.4\%   & 82.1s    & 55.5\%  & 88.4s   \\
GWL\cite{xu2019gromov}   & 83.7\%   & 226.4s    & 66.3\%   & 254.7s    & 57.6\%  & 246.5s   \\
DSPFP \cite{lu2016fast}   & 78.1\%   & 10.2s    & 60.8\%   & 
10.14s     & 42.9\%   & 9.8s       \\
GA \cite{gold1996graduated}      & 14.0\%   & 24.4s    & 9.6\%    & 24.5s    & 7.4\%    & 24.0s    \\

GRASP \cite{hermanns2023grasp}     & 38.6\%   & \textbf{1.1}s    & 8.3\%   & \textbf{1.2}s    & 5.6\%   & \textbf{1.2}s    \\
SCG \cite{shen2022dyna}      & 73.1\%   & 10.7s    & 53.1\%    & 10.3s    & 43.0\%    & 10.0s    \\

AIPFP \cite{leordeanu2009integer,lu2016fast}     & 43.1\%   & 105.4s    & 27.1\%   & 75.2s   & 22.1\%   & 73.8s\\ \hline
ASM     & \textbf{89.0\%}   & 28.7s    & \textbf{81.2\%}   & 22.7s    &\textbf{ 75.1\% }  & 22.6s    \\ \hline
\end{tabular}%
}
\end{table}


\begin{table}[ht]
\centering
\caption{Comparisons on Facebook network}
\label{tab:facebook}
\resizebox{\columnwidth}{!}{%
\begin{tabular}{c|ccccccc}
\hline
Social network & \multicolumn{2}{c}{5\% noise} & \multicolumn{2}{c}{15\% noise} & \multicolumn{2}{c}{25\% noise} \\ \hline
Methods & Node Acc & time  & Node Acc & time  & Node Acc & time  \\ \hline
S-GWL \cite{xu2019scalable}   &  26.4\%   &  1204.1s    &  18.3\%   &  1268.2s    &  17.9\%  &  1295.8s   \\
GWL\cite{xu2019gromov}   &  78.1\%   &  3721.6s    &  68.4\%   &  4271.3s    &  60.8\%  &  4453.9s   \\
DSPFP \cite{lu2016fast}   &  79.7\%   & 151.3s    & 68.3\%   & 
154.2s     & 62.2\%   & 156.9s       \\
GA \cite{gold1996graduated}      & 35.5\%   & 793.2s    & 21.4\%    & 761.7s    & 16.0\%    & 832.6s    \\

GRASP\cite{hermanns2023grasp}     & 37.9\%   & \textbf{63.6s}    & 20.3\%   & \textbf{67.4s}    & 15.7\%   & \textbf{71.3s}    
\\ SCG \cite{shen2022dyna}      & 58.2\%   & 211.7s    & 43.1\%    & 221.3s    & 43.1\%    & 211.0s    \\

AIPFP \cite{leordeanu2009integer,lu2016fast}     & 68.6\%   & 2705.5s    & 55.1\%   &2552.7s   & 47.8\%   & 2513.8s    \\ \hline
ASM     & \textbf{91.1\%}   & 387.2s    & \textbf{88.4\%}   & 391.7s    &\textbf{ 85.7\% }  & 393.1s    \\ \hline
\end{tabular}%
}
\end{table}

\subsection{Real images}
\begin{table*}[h]
\centering
\caption{Comparisons for graph matching methods on real images with 1000 nodes}
\label{tab: real image}
\resizebox{1.8\columnwidth}{!}{%
\begin{tabular}{c|cc|cc|cc|cc}
\hline
Image set & \multicolumn{2}{c|}{Leuven}      & \multicolumn{2}{c|}{ubc}         & \multicolumn{2}{c|}{trees}       & \multicolumn{2}{c}{wall}         \\ \hline
Methods   & Error ($\times 10^4$) & time (s) & Error ($\times 10^4$) & time & Error ($\times 10^4$) & time & Error ($\times 10^4$) & time \\
DSPFP \cite{lu2016fast}& 3.1 & 5.9s  & 2.7 & 5.6s  & 3.0 & 5.6s  & 2.7 & 6.1s  \\
GA \cite{gold1996graduated}   & 2.8 & 29.5s & 2.4 & 30.3s & 3.8 & 30.0s & 4.0 & 26.7s \\
AIPFP \cite{leordeanu2009integer,lu2016fast} & 2.7 & 31.1s  & 2.5 & 34.1s  & 2.9 & 43.6s  & 2.9 & 49.2s  \\
SCG \cite{shen2022dyna}   & 2.7 & \textbf{1.7}s & 2.3 & \textbf{1.8}s & 2.5 & \textbf{1.5}s & 2.6 & \textbf{1.3}s \\
ASM   & \textbf{2.3} & 4.9s  & \textbf{2.2} & 3.8s  & \textbf{2.4} &3.0s  & \textbf{2.5} &3.2s  \\ \hline
\end{tabular}%
}
\end{table*}
In this set of experiments, we construct attributed weighted graphs from a public dataset\footnote{http://www.robots.ox.ac.uk/~vgg/research/affine/}, which covers five common picture transformations: viewpoint changes, scale changes, image blur, JPEG compression, and illumination. 

Following the experimental protocol of \cite{lu2016fast}, the construction includes extraction of nodes, selection of nodes, and calculation of edge weight. We extract key points by scale-invariant feature transform (SIFT) as candidates of nodes, and corresponding feature vectors are also obtained in this step. Nodes are selected if the node candidates have high similarity (inner product of feature vectors) with all candidate nodes from another graph. Then, all chosen nodes are connected, and the weights of edges are measured by the Euclidean distance between two corresponding nodes. 

The running time and matching error are calculated by the average results of five matching pairs (1 vs. 2, 2 vs. 3, 3 vs. 4, 4 vs. 5, 5 vs. 6) from the same picture set. The results are shown in Figure \ref{fig: real image}. More details on experiments with 1000 nodes are shown in Table \ref{tab: real image} for further comparison. The ASM method consistently attains the lowest error across all cases while maintaining comparable efficiency. 
\section{Conclusion}
This paper proposes an adaptive softassign method for large graph matching problems. It can automatically tune the parameter according to a given error bound, which is convenient and robust. The resulting matching algorithm enjoys significantly higher accuracy than previous state-of-the-art large graph matching algorithms. 

The proposed Hadamard-Equipped Sinkhorn formulas significantly accelerate the adaptive softassign process and avoid numerical instability in Sinkhorn. These formulas provide a new perspective on operations related to Sinkhorn and optimal transport problems. The Hadamard-Equipped Sinkhorn formulas seem to have some nice properties of group, which might be a promising research direction. 

Experiments show that ASM has comparable efficiency in attributed graph matching tasks while the efficiency is not in the first tier in plain graph matching. Therefore, increasing the efficiency in plain graph matching is one of the future works.

\section*{Acknowledgement}
\thanks{The authors would like to appreciate the support from the Interdisciplinary Intelligence Super Computer Center of Beijing Normal University at Zhuhai. This work was partially supported by the Natural Science Foundation of China (12271047); UIC research grant (R0400001-22; UICR0400008-21; UICR04202405-21); Guangdong College Enhancement and Innovation Program (2021ZDZX1046); Key Programme Special Fund in XJTLU (KSF-E-32), Research Enhancement Fund of XJTLU (REF-18-01-04); Guangdong Provincial Key Laboratory of Interdisciplinary Research and Application for Data Science, BNU-HKBU United International College (2022B1212010006).}

\clearpage

{
    \small
    \bibliographystyle{ieeenat_fullname}
    \bibliography{main}
}

%

\clearpage
\appendix
\maketitlesupplementary
\setcounter{lemma}{0} 
\setcounter{proposition}{0}
\section{More details of the projected fixed point method}
Consider the objective function 
\begin{equation}
    \mathcal{Z} ( {M}) = \frac{1}{2} \operatorname{tr}\left({ {M}}^{T} { {A}} { {M}} {\widetilde{ {A}}}\right)+\lambda \operatorname{tr}\left({ {M}}^{T}{ {K}}\right).   
    \label{eq.zm_app}
\end{equation}
Given an initial condition $ {M}^{(k)}$, we can linearize the objective function at $ {M}^{(k)}$ via the Taylor series approximation:
\begin{equation}
    \mathcal{Z}( {M}) \approx \mathcal{Z}( {M}^{(k)}) + \operatorname{tr} \left\{\nabla \mathcal{Z}( {M}^{(k)})^T( {M}- {M}^{(k)})\right\} \label{eq:zml}
\end{equation}
One can find an approximate solution to \eqref{eq.zm_app} by maximizing a sequence of the linearization of $\mathcal{Z}(M)$ in \eqref{eq:zml}. Since $ {M}^{{(k)}}$ is a constant, the maximization of the linear approximation is equivalent to the following linear assignment problem 
\begin{equation}
   \max_{ {M} \in \Sigma_{n\times n} }\operatorname{tr} ( \nabla \mathcal{Z}( {M}^{(k)})^T  {M}) =\max_{ {M} \in \Sigma_{n\times n} } \langle\nabla \mathcal{Z}( {M}^{(k)}),   {M}) \rangle, 
    \label{eq: assignment}
\end{equation}
Therefore, the quadratic assignment problem can be transformed into a series of linear assignment problems. This idea is first proposed in \cite{gold1996graduated} with a different objective function. The solution to each linear assignment problem in \eqref{eq: assignment} can be shown as
\begin{equation}
     {M}^{(k)} = \mathcal{P}(\nabla \mathcal{Z}( {M}^{(k-1)})).
\end{equation}
where $P$ is a projection that includes the doubly stochastic projection used in \cite{lu2016fast} and the discrete projection used in \cite{leordeanu2009integer}.
A generation of such an iterative formula  is
\begin{equation}
     {M}^{(k)} = (1- \alpha)  {M}^{(k-1)}  + \alpha \mathcal{P}(\nabla \mathcal{Z}( {M}^{(k-1)})).
\end{equation}
Such a formula can cover all the points between two doubly stochastic matrices. 
The resulting algorithm is called the \textit{projected fixed-point method} \cite{leordeanu2009integer}.  




\section{Proofs of Proposition 1 and Proposition 2}

\begin{proposition}
    For a square matrix $ {X}$ and $\beta>0$, we have
    \begin{equation}
       | \langle  {S}^{\beta}_{ {X}},  {X} \rangle - \langle  {S}^{\infty}_X,  {X}\rangle | \leq \| {S}^{\beta}_X - {S}^{\infty}_X\| \| {X}\| \leq \frac{c}{\mu} (e^{(-\mu \beta)})\| {X}\|,
    \end{equation}
    where $c$ and $\mu>0$  are  constants independent of $\beta$.
\end{proposition}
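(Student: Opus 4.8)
The plan is to prove the displayed chain as two separate estimates. The left inequality is purely algebraic: by bilinearity of the Frobenius inner product,
\begin{equation}
\langle S^{\beta}_{X}, X\rangle - \langle S^{\infty}_X, X\rangle = \langle S^{\beta}_X - S^{\infty}_X,\, X\rangle,
\end{equation}
so applying Cauchy--Schwarz, $|\langle A, B\rangle| \le \|A\|\,\|B\|$, immediately gives $|\langle S^{\beta}_X - S^{\infty}_X, X\rangle| \le \|S^{\beta}_X - S^{\infty}_X\|\,\|X\|$. This settles the first inequality with no further work and reduces everything to the exponential-decay bound on $\|S^{\beta}_X - S^{\infty}_X\|$.

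For the right inequality, the cleanest route I would take is to bound the $\beta$-derivative of the softassign trajectory and integrate. The penalized objective in \eqref{eq: entropic assignment} is strictly concave in $S$, so $\beta \mapsto S^{\beta}_X$ is a smooth curve that converges to the unique LP optimum $S^{\infty}_X$ as $\beta\to\infty$ (the exponential-penalty trajectory of \citet{cominetti1994asymptotic}). Granting a derivative estimate of the form $\bigl\|\frac{d}{d\beta}S^{\beta}_X\bigr\| \le c\, e^{-\mu\beta}$, I would write $S^{\beta}_X - S^{\infty}_X = -\int_{\beta}^{\infty}\frac{d}{ds}S^{s}_X\,ds$ and bound
\begin{equation}
\|S^{\beta}_X - S^{\infty}_X\| \le \int_{\beta}^{\infty}\Bigl\|\tfrac{d}{ds}S^{s}_X\Bigr\|\,ds \le \int_{\beta}^{\infty} c\,e^{-\mu s}\,ds = \frac{c}{\mu}\,e^{-\mu\beta}.
\end{equation}
This recovers exactly the constant $c/\mu$ appearing in the statement, and the same derivative estimate integrated over $[\beta,\beta+\Delta\beta]$ will reproduce the bound of Proposition 2, which explains why the two propositions share the same $c$ and $\mu$.

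The hard part is therefore the derivative estimate $\bigl\|\frac{d}{d\beta}S^{\beta}_X\bigr\| \le c\,e^{-\mu\beta}$, which is where the problem structure genuinely enters. Following the asymptotic analysis of \citet{cominetti1994asymptotic} and its transport-problem refinement in \citet{luise2018differential}, I would differentiate the optimality (Sinkhorn fixed-point) conditions characterizing $S^{\beta}_X = D_{(\mathbf{r})}\exp(\beta X)D_{(\mathbf{c})}$, obtaining an implicit expression for $\frac{d}{d\beta}S^{\beta}_X$ in terms of the balancing vectors $\mathbf{r}, \mathbf{c}$. The exponential factor arises because the off-support entries of $S^{\beta}_X$ are suppressed like $e^{-\mu\beta}$, with $\mu$ equal to the optimality gap of the assignment problem (the spread between the optimal assignment score and the best competing configuration); controlling this requires uniqueness of $S^{\infty}_X$ and a uniform-in-$\beta$ bound on $\mathbf{r}, \mathbf{c}$ near the optimum. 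I would invoke these asymptotic results rather than reprove them, verifying only that the nondegeneracy giving a strictly positive $\mu$ holds in the assignment setting.
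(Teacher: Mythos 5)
Your proposal is correct and follows essentially the same route as the paper's proof: Cauchy--Schwarz handles the first inequality, and the second comes from the exponential decay of the trajectory derivative $\dot{S}^{\beta}_X$ due to \citet{cominetti1994asymptotic}, integrated over $[\beta,\infty)$ via the fundamental theorem of calculus. The only cosmetic difference is that the paper makes the reduction to the Cominetti--San Mart\'in setting explicit by vectorizing the entropic assignment problem (using $\sum_i \mathbf{m}_i = n$ to absorb the constant in the entropy term), whereas you sketch differentiating the Sinkhorn optimality conditions before invoking the same reference.
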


\begin{proposition}
For a square matrix $ {X}$ and  $\beta,\  \Delta \beta > 0$, we have
    \begin{equation}
        \| {S}^{\beta}_{ {X}} - {S}^{\beta+\Delta \beta}_{ {X}} \|\leq (1- e^{(-\mu \Delta \beta)}) \frac{c}{\mu} e^{(-\mu \beta)} , 
    \end{equation}
     where $c$ and $\mu>0$ are constants independent of $\beta$.
\end{proposition}

\begin{proof}

We first transform the problem \eqref{eq: entropic assignment} into vector form
\begin{equation}
    \langle  {M},  {X} \rangle + \frac{1}{\beta} \mathcal{H}({M}) = \mathbf{m}^T \mathbf{x} - \frac{1}{\beta} \sum \mathbf{m}_{i} \ln(\mathbf{m}_{i}),
\end{equation}
where $\mathbf{m}=\operatorname{vec}{{M}}$. Since $\sum_{i}^{n^2} \mathbf{m}_i =n$, then
\eqref{eq: entropic assignment} is equivalent to the well studied problem \cite{cominetti1994asymptotic}
\begin{equation}
\mathbf{s}^\beta_{\mathbf{x}} = \operatornamewithlimits{argmin}_{\mathbf{m}}  \mathbf{m}^T (-\mathbf{x}) + \frac{1}{\beta} \sum_{i} \mathbf{m}_{i} (\ln(\mathbf{m}_{i})-1) .
\end{equation}
Let $\dot{{S}}^{\beta}_X$ be derivative of ${S}^{\beta}_X$ with respect to $\beta$, according to the proof of \cite[Proposition 5.1]{cominetti1994asymptotic},  $\dot{{S}}^{\beta}_X$ converges towards zero exponentially i.e., there exist a $c_0>0$ and $\mu >0$ such that $$|(\dot{{S}}^{\beta}_X)_{ij}| \leq c_0 e^{-\mu\beta}.$$
   According to the fundamental theory of Calculus,
   \begin{equation}
          \begin{aligned}
               |({S}^{\infty}_X)_{ij} -({S}^{\beta}_X)_{ij}|  &= |\int^\infty_\beta (\dot{{S}}^{\tau}_X)_{ij}d\tau|\leq \int^\infty_\beta |(\dot{{S}}^{\tau}_X)_{ij}|d\tau 
               \\
               &\leq \frac{c_0}{\mu} (e^{(-\mu \beta)}).
   \end{aligned}
   \end{equation}
Similarly, we have 
    \begin{equation}
    \begin{aligned}
                |({S}^{\beta}_X)_{ij} -({S}^{\beta+\Delta \beta}_X)_{ij}|&=|\int^{\beta+\Delta \beta}_\beta (\dot{{S}}^{\tau}_X)_{ij}d\tau| 
                \\
                &\leq \frac{c_0}{\mu} (e^{(-\mu \beta)} - e^{(-\mu (\beta+\Delta \beta))}).
    \end{aligned}
    \end{equation}
The rest of the proof for the two propositions follows easily from this. 
\end{proof}

\section{Proof of Proposition 3}
\begin{proposition} {Hadamard-Equipped Sinkhorn}\\
Let $ {X} \in \mathbb{R}^{n \times n}_+$, then
\begin{equation}
    \mathcal{P}_{sk}( {X}) ={X} \circ  {SK}^{( {X})}= {X} \circ ( \mathbf{r}\otimes  \mathbf{c}^T)
\end{equation}
where $ {SK}^{( {X})} \in \mathbb{R}^{n \times n}$  is unique, $ \mathbf{r}$ and $ \mathbf{c} \in \mathbb{R}^n_+$ are balancing vectors so that ${D}_{( \mathbf{r})} {X} {D}_{( \mathbf{c})}$ is a doubly stochastic matrix. 
\end{proposition}
\begin{proof}
In elementwise form, for each entry $(i,j)$ of the resulting matrix, we have:
\[
\left[ D_{(\mathbf{r})} X D_{(\mathbf{c})} \right]_{ij} = r_i \cdot X_{ij} \cdot c_j
\]

We now observe that $\mathbf{r}\otimes  \mathbf{c}^T$ is a matrix with entries $(\mathbf{r}\otimes  \mathbf{c}^T)_{ij} = r_i c_j$. Hence, we can express the product as:
\[
(D_{(\mathbf{r})} X D_{(\mathbf{c})})_{ij} = X_{ij} \cdot (\mathbf{r}\otimes  \mathbf{c}^T)_{ij}
\]

This is precisely the Hadamard (elementwise) product of $X$ with the matrix $\mathbf{r}\otimes  \mathbf{c}^T$:
\[
D_{(\mathbf{r})} X D_{(\mathbf{c})} = X \circ (\mathbf{r}\otimes  \mathbf{c}^T)
\]

Thus, we conclude:
\[
\mathcal{P}_{\mathrm{sk}}(X) = D_{(\mathbf{r})} X D_{(\mathbf{c})} = X \circ (\mathbf{r}\otimes  \mathbf{c}^T).
\]

\end{proof}

\section{Proofs of Lemma 1, Lemma 2, and Lemma 3}
\begin{lemma}
    Let $X \in \mathbb{R}^{n \times n}_+$, $ \mathbf{u}$ and $ \mathbf{v} \in \mathbb{R}^n_+$, then
    \begin{equation}
        \mathcal{P}_{sk}(X) = \mathcal{P}_{sk}(X\circ(\mathbf{u} \otimes  \mathbf{v}^T)).
    \end{equation}
\end{lemma}

\begin{proof}
Let $Y = X\circ(\mathbf{u}^T \otimes  \mathbf{v})$, we have
\begin{equation}
    \mathcal{P}_{sk}(Y)= Y \circ (\mathbf{r}_Y \otimes \mathbf{c}_Y^T).
\end{equation}
Then
\begin{equation}
        \begin{aligned}
            \mathcal{P}_{sk}(X\circ(\mathbf{u}\otimes  \mathbf{v}))&=X \circ (\mathbf{u}\otimes  \mathbf{v}^T) \circ (\mathbf{r}_Y\otimes  \mathbf{c}_Y^T)\\
            &=X \circ ((\underbrace{\mathbf{u}\circ \mathbf{r}_Y}_{\mathbf{r}_1})\otimes  (\underbrace{\mathbf{v}\circ  \mathbf{c}_Y}_{\mathbf{c}_1})^T)\\
            &=X\circ(\mathbf{r}_1\otimes  \mathbf{c}_1^T)
            \\
            &=\mathcal{P}_{sk}(X).
        \end{aligned}
    \end{equation}
Since $X\circ(\mathbf{r}_1\otimes  \mathbf{c}_1^T)$ is a doubly stochastic matrix, $\mathbf{r}_1\otimes  \mathbf{c}_1^T = {SK^{(X)}}$ according to the Proposition \ref{pro.hadamard}.
\end{proof}

\begin{lemma}{Sinkhorn-Hadamard product}

Let ${X}_1, {X}_2 \in \mathbb{R}^{n \times n}_+$,  then
$
    \mathcal{P}_{sk}({X}_1 \circ {X}_2) = \mathcal{P}_{sk}(\mathcal{P}_{sk}({X}_1) \circ {X}_2).
$
\end{lemma}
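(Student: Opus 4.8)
The plan is to deduce Lemma 2 directly from Proposition 3 (Hadamard-Equipped Sinkhorn) together with Lemma 1, by observing that replacing $X_1$ by its Sinkhorn projection $\mathcal{P}_{sk}(X_1)$ inside the Hadamard product only multiplies $X_1 \circ X_2$ by a rank-one factor, and such factors are invisible to $\mathcal{P}_{sk}$ (this is precisely the content of Lemma 1). So the whole statement reduces to a change of viewpoint plus one invocation of Lemma 1.

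First I would invoke Proposition 3 to expand the inner projection as $\mathcal{P}_{sk}(X_1) = X_1 \circ (\mathbf{r}^T \otimes \mathbf{c})$, where $\mathbf{r},\mathbf{c} \in \mathbb{R}^n_+$ are the balancing vectors that make $D_{(\mathbf{r})} X_1 D_{(\mathbf{c})}$ doubly stochastic. Since these vectors are strictly positive and $X_2 \in \mathbb{R}^{n\times n}_+$, the matrix $\mathcal{P}_{sk}(X_1) \circ X_2$ is still entrywise nonnegative, so $\mathcal{P}_{sk}$ remains well defined on it.

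Next, using commutativity and associativity of the Hadamard product, I would regroup the factors as
\begin{equation}
\mathcal{P}_{sk}(X_1) \circ X_2 = \bigl(X_1 \circ (\mathbf{r}^T \otimes \mathbf{c})\bigr) \circ X_2 = (X_1 \circ X_2) \circ (\mathbf{r}^T \otimes \mathbf{c}).
\end{equation}
This exhibits $\mathcal{P}_{sk}(X_1)\circ X_2$ as the base matrix $X_1 \circ X_2$ modulated by a single rank-one Hadamard factor $\mathbf{r}^T \otimes \mathbf{c}$.

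Finally I would apply Lemma 1 with base matrix $X_1 \circ X_2$ and positive vectors $\mathbf{u}=\mathbf{r}$, $\mathbf{v}=\mathbf{c}$, giving $\mathcal{P}_{sk}\bigl((X_1 \circ X_2)\circ(\mathbf{r}^T\otimes \mathbf{c})\bigr)=\mathcal{P}_{sk}(X_1\circ X_2)$. Chaining this with the previous display yields $\mathcal{P}_{sk}(\mathcal{P}_{sk}(X_1)\circ X_2)=\mathcal{P}_{sk}(X_1\circ X_2)$, which is the claim. There is no genuine analytic obstacle; the only point requiring care is the bookkeeping of which matrix plays the role of the ``base'' in Lemma 1, together with checking that the factor absorbed is exactly the positive rank-one matrix $\mathbf{r}^T\otimes \mathbf{c}$, so that Lemma 1's hypothesis is met and its invariance applies.
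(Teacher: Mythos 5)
Your proposal is correct and follows essentially the same route as the paper's own proof: expand $\mathcal{P}_{sk}(X_1)$ as $X_1 \circ (\mathbf{r}^T \otimes \mathbf{c})$ via Proposition 3, regroup the Hadamard factors, and absorb the rank-one factor with Lemma 1. The paper merely writes the rank-one factor as $SK^{(X_1)}$ and compresses the regrouping into one line; your version spells out the same bookkeeping explicitly.
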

\begin{proof}
According to Lemma 1, the right-hand side is
\begin{align}
        \mathcal{P}_{sk}(\overbrace{\mathcal{P}_{sk}({X}_1)} \circ {X}_2) &= \mathcal{P}_{sk}(\overbrace{{X}_1 \circ {SK}^{({X}_1)}}\circ {X}_2)\\
        &=\mathcal{P}_{sk}({X}_1 \circ {X}_2),
\end{align}
which proves this Lemma.




\end{proof}

\ 
\ 

\begin{lemma}{Sinkhorn-Hadamard power}

Let ${X}_1, {X}_2 \in \mathbb{R}^{n \times n}_+$,  
then $
    \mathcal{P}_{sk}({X}^{\circ (ab)}) =\mathcal{P}_{sk}(\mathcal{P}_{sk}({X}^{\circ a})^{\circ b}),
$
where $a$ and $b$ are two constants not equal to zero.
\end{lemma}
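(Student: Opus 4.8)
The plan is to reduce Lemma 3 to Lemma 1 by exploiting the rank-one structure that Proposition \ref{pro.hadamard} attaches to any Sinkhorn normalization. First I would apply Proposition \ref{pro.hadamard} to the matrix $X^{\circ a}$, writing $\mathcal{P}_{sk}(X^{\circ a}) = X^{\circ a} \circ (\mathbf{r}^T \otimes \mathbf{c})$, where $\mathbf{r}, \mathbf{c} \in \mathbb{R}^n_+$ are the (unique) balancing vectors making ${D}_{(\mathbf{r})} X^{\circ a} {D}_{(\mathbf{c})}$ doubly stochastic. This isolates the normalization as a positive rank-one Hadamard factor.

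Next I would raise this identity to the Hadamard power $b$. Because the Hadamard power acts entrywise, it distributes across both the Hadamard product and the outer product, giving
\begin{equation}
\mathcal{P}_{sk}(X^{\circ a})^{\circ b} = (X^{\circ a})^{\circ b} \circ (\mathbf{r}^T \otimes \mathbf{c})^{\circ b} = X^{\circ (ab)} \circ \left((\mathbf{r}^{\circ b})^T \otimes (\mathbf{c}^{\circ b})\right),
\end{equation}
where I use $(X^{\circ a})^{\circ b} = X^{\circ(ab)}$ together with $(\mathbf{r}_i \mathbf{c}_j)^b = \mathbf{r}_i^b \mathbf{c}_j^b$ at the level of entries.

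The key observation is that $(\mathbf{r}^{\circ b})^T \otimes (\mathbf{c}^{\circ b})$ is again a positive rank-one outer product: since $b \neq 0$ and $\mathbf{r}, \mathbf{c}$ have strictly positive entries, we have $\mathbf{r}^{\circ b}, \mathbf{c}^{\circ b} \in \mathbb{R}^n_+$. I would then apply $\mathcal{P}_{sk}$ to both sides and invoke Lemma 1 with $\mathbf{u} = \mathbf{r}^{\circ b}$ and $\mathbf{v} = \mathbf{c}^{\circ b}$, which asserts that Hadamard multiplication by such an outer product leaves $\mathcal{P}_{sk}$ unchanged. This yields $\mathcal{P}_{sk}(\mathcal{P}_{sk}(X^{\circ a})^{\circ b}) = \mathcal{P}_{sk}(X^{\circ(ab)})$, which is exactly the claim.

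The argument is essentially mechanical once the rank-one factor is identified, so the only points demanding care are the positivity bookkeeping and the distributivity: confirming that a real nonzero Hadamard power preserves membership in $\mathbb{R}^n_+$ so that Lemma 1 applies verbatim, and that the entrywise power genuinely commutes with the outer product. The main conceptual step, rather than a genuine obstacle, is recognizing that Lemma 1 is the correct tool, since it absorbs precisely the extraneous rank-one weight produced by the intermediate Sinkhorn normalization that separates $\mathcal{P}_{sk}(X^{\circ a})^{\circ b}$ from $X^{\circ(ab)}$.
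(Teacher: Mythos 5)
Your proof is correct and follows essentially the same route as the paper's: expand $\mathcal{P}_{sk}(X^{\circ a})$ via Proposition \ref{pro.hadamard}, distribute the Hadamard power $b$ so the intermediate normalization becomes the rank-one factor $(\mathbf{r}^{\circ b})^T \otimes (\mathbf{c}^{\circ b})$, and absorb it with Lemma 1. The only difference is that you spell out explicitly that the power of the rank-one factor is again a positive rank-one outer product, a step the paper leaves implicit when it writes $({SK}^{({X}^{\circ a})})^{\circ b}$ and applies Lemma 1 directly.
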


\begin{proof}
According to Lemma 1, the right-hand side is
\begin{align}
\mathcal{P}_{sk}(\mathcal{P}_{sk}({X}^{\circ a})^{\circ b}) &= \mathcal{P}_{sk}(({X}^{\circ a}\circ {SK}^{({X}^{\circ a})})^{\circ b})\\
&= \mathcal{P}_{sk}({X}^{\circ (a b)} \circ ({SK}^{({X}^{\circ a})})^{\circ b}) \\
&=\mathcal{P}_{sk}({X}^{\circ (ab)})
\end{align}
which completes the proof.


\end{proof}



\section{Relation with the proximal point method}

In this subsection, we shall demonstrate the equivalence and difference between the adaptive softassign and the proximal point method proposed by \cite{xie2020fast}. The linear convergence rate of the adaptive softassign methods can be inferred from the convergence of the proximal point method.  While the difference brings computational efficiency.

\begin{proposition}
\label{pro:relation}
The softassign transition \eqref{eq:transition} can solve
\begin{equation}
 {S}^{\beta_2}_X=\arg \max_{s \in \Sigma_{n\times n}}\langle X,  {S}\rangle-(\beta_2-\beta_1) D_h\left( {S},  {S}^{\beta_1}_X\right),
 \label{eq:transition_problem}
\end{equation}
\begin{equation}
    D_h(\mathbf{x}, \mathbf{y})=\sum_{i=1}^n x_i \log \frac{x_i}{y_i}-\sum_{i=1}^n x_i+\sum_{i=1}^n y_i.
\end{equation}
\end{proposition}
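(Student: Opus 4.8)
The plan is to characterize the maximizer of the strictly concave program \eqref{eq:transition_problem} by its first-order conditions and then recognize that these conditions force it to coincide with the softassign-transition output $\mathcal{P}_{sk}((S^{\beta_1}_X)^{\circ(\beta_2/\beta_1)})$, which the Softassign Transition theorem already identifies with $S^{\beta_2}_X$. First I would record the structural facts that make the variational characterization legitimate: $S^{\beta_1}_X$ has strictly positive entries (it is a Sinkhorn scaling of $\exp(\beta_1 X)$), so $D_h(\cdot,S^{\beta_1}_X)$ is finite and smooth on the relative interior of $\Sigma_{n\times n}$; the objective is continuous and strictly concave there because of the $\sum_{ij}S_{ij}\log S_{ij}$ term; and the uniform matrix $\tfrac1n\mathbf{1}\mathbf{1}^T$ is strictly feasible, so Slater's condition holds and the unique maximizer is characterized by stationarity of the Lagrangian $\mathcal{L}(S,\mathbf{f},\mathbf{g})=\langle X,S\rangle-(\beta_2-\beta_1)D_h(S,S^{\beta_1}_X)+\mathbf{f}^T(S\mathbf{1}-\mathbf{1})+\mathbf{g}^T(S^T\mathbf{1}-\mathbf{1})$.

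Next I would set $\partial\mathcal{L}/\partial S_{ij}=0$. Since $\partial D_h(S,S^{\beta_1}_X)/\partial S_{ij}=\log(S_{ij}/(S^{\beta_1}_X)_{ij})$, the stationarity condition is log-linear in $S_{ij}$, and solving it yields $S_{ij}=(S^{\beta_1}_X)_{ij}\exp(\theta X_{ij})\,a_i b_j$, where $a_i,b_j>0$ are exponentials of the multipliers $\mathbf{f},\mathbf{g}$ and $\theta$ is the scalar determined by the divergence coefficient. This exhibits the maximizer as a positive rank-one diagonal rescaling of $(S^{\beta_1}_X)\circ\exp(\theta X)$ that is doubly stochastic, i.e. as $\mathcal{P}_{sk}\bigl((S^{\beta_1}_X)\circ\exp(\theta X)\bigr)$.

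I would then substitute the closed form $(S^{\beta_1}_X)_{ij}=r_i\exp(\beta_1 X_{ij})c_j$ from \eqref{eq:softassign_exp} and merge the two exponentials, so that the matrix inside $\mathcal{P}_{sk}$ becomes $\exp(\beta_2 X)$ multiplied by the rank-one factors $r_i c_j$ and $a_i b_j$. By Lemma 1 these diagonal factors can be discarded without changing the Sinkhorn projection, leaving $\mathcal{P}_{sk}(\exp(\beta_2 X))=S^{\beta_2}_X$; equivalently, the same matrix equals $(S^{\beta_1}_X)^{\circ(\beta_2/\beta_1)}$ up to a rank-one diagonal factor, and the Softassign Transition theorem identifies its projection with $S^{\beta_2}_X$. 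Uniqueness of the Sinkhorn scaling (Proposition 3) then confirms that this KKT point is the unique maximizer, which is what \eqref{eq:transition_problem} asserts.

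The main obstacle I expect is the exponent bookkeeping: verifying that merging $\exp(\theta X)$ with the factor $\exp(\beta_1 X)$ hidden inside $S^{\beta_1}_X$ produces exactly $\exp(\beta_2 X)$, i.e. that the effective inverse temperature delivered by the stationarity condition matches $\beta_2$. This is the single place where the divergence coefficient enters decisively, and it is what makes the identification with the softassign transition go through; the surrounding steps (positivity of $S^{\beta_1}_X$, Slater's condition, and the absorption of the rank-one factors $a_i b_j$ and $r_i c_j$ via Lemma 1) are routine by comparison.
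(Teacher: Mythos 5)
Your KKT route is legitimate and, in outline, more self-contained than the paper's own proof: the paper does not derive first-order conditions at all, it simply quotes the closed-form solution of the Bregman-proximal step from the proximal-point literature, namely $\mathcal{P}_{sk}\bigl(S^{\beta_1}_X \circ \exp((\beta_2-\beta_1)X)\bigr)$, and then finishes with exactly the manipulations you use at the end (write $S^{\beta_1}_X=\exp(\beta_1 X)\circ SK^{(\exp(\beta_1 X))}$ by Proposition 3, drop the rank-one factor by Lemma 1, and conclude $\mathcal{P}_{sk}(\exp(\beta_2 X))=S^{\beta_2}_X$). So your set-up and your final absorption step agree with the paper; the difference is that you propose to establish the prox formula yourself rather than cite it.

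The genuine gap is that you never carry out the one computation you yourself flag as decisive, and for the statement as printed that computation does not come out the way you hope. Since $\partial D_h(S,S^{\beta_1}_X)/\partial S_{ij}=\log\bigl(S_{ij}/(S^{\beta_1}_X)_{ij}\bigr)$, stationarity of your Lagrangian reads
\begin{equation*}
X_{ij}-(\beta_2-\beta_1)\log\frac{S_{ij}}{(S^{\beta_1}_X)_{ij}}+f_i+g_j=0
\quad\Longrightarrow\quad
S_{ij}=(S^{\beta_1}_X)_{ij}\,\exp\Bigl(\tfrac{X_{ij}+f_i+g_j}{\beta_2-\beta_1}\Bigr),
\end{equation*}
so your $\theta$ equals $\tfrac{1}{\beta_2-\beta_1}$, not $\beta_2-\beta_1$. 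Merging with $(S^{\beta_1}_X)_{ij}=r_i\,e^{\beta_1 X_{ij}}c_j$ gives the effective inverse temperature $\beta_1+\tfrac{1}{\beta_2-\beta_1}$, which equals $\beta_2$ only in the special case $\beta_2-\beta_1=1$. Hence the unique maximizer of \eqref{eq:transition_problem} as written is $S_X^{\beta_1+1/(\beta_2-\beta_1)}$, not $S^{\beta_2}_X$: the step you deferred as ``exponent bookkeeping'' is precisely where the argument, and the printed statement, break. What your derivation actually uncovers is that the divergence weight in \eqref{eq:transition_problem} must be $\tfrac{1}{\beta_2-\beta_1}$ rather than $\beta_2-\beta_1$ (this is consistent with the kernel $\exp((\beta_2-\beta_1)X)$ the paper's proof quotes, and with the $\tfrac{1}{\beta}\mathcal{H}(S)$ convention of \eqref{eq: entropic assignment}); with that coefficient, $\theta=\beta_2-\beta_1$, the merged exponent is $\beta_1+(\beta_2-\beta_1)=\beta_2$, and your proof closes exactly as sketched. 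A complete submission must do this computation explicitly and state the corrected coefficient, rather than label it an obstacle and assert the conclusion.
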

\begin{proof}
The solution of \eqref{eq:transition_problem} in the proximal point method is 
\begin{equation}
  \mathcal{P}_{sk}( {S}^{\beta_1}_X \circ \exp((\beta_2-\beta_1)X)). 
\end{equation}
According to the Hadamard-Equipped Sinkhorn Theorem, we have
\begin{equation}
    {S}^{\beta_1}_X = \exp(\beta_1 X) \circ SK^{(\exp(\beta_1 X))}.
\end{equation}
Then
\begin{equation}
    \begin{aligned}
    \mathcal{P}_{sk}&( {S}^{\beta_1}_X \circ \exp((\beta_2-\beta_1)X))\\
        &=\mathcal{P}_{sk}(\exp(\beta_1 X) \circ \exp((\beta_2-\beta_1)X))\\
        &= \mathcal{P}_{sk}(\exp(\beta_2 X))\\
        &={S}^{\beta_2}_X,
    \end{aligned}
\end{equation}
which is equivalent to the softassign transition \eqref{eq:transition}.

\end{proof}

According to Proposition \ref{pro:relation}, we can rewrite the iterative formula of the adaptive softassign as a proximal point method in \cite{xie2020fast}
\begin{equation}
 {S}^{(k)}_X=\arg \max_{s \in \Sigma_{n\times n}}\langle X,  {S}\rangle-(\Delta \beta) D_h\left( {S},  {S}^{(k-1)}_X\right),
 \label{eq:proximal}
\end{equation}
where $D_h(\cdot)$, the \textit{Bregman divergence}, is a regularization term to define the proximal operator. This indicates adaptive softassign is a variant of the proximal point method for problem \eqref{eq: entropic assignment} and enjoys a linear convergence rate \cite{xie2020fast}. 

Let us discuss the difference between adaptive softassign and the proximal point method. Adaptive softassign aims at obtaining a sub-optimal solution and $\beta_{\epsilon}$ with a given error bound, where $\beta_{\epsilon}$ can be used as a good initial $\beta_0$ in the next adaptive softassign in the whole graph matching process. While the proximal point method aims to find the exact solution, its efficiency is secondary and the change of $\beta$ is implicit. As to the computation aspect, the proximal point method solves \eqref{eq:transition_problem} according to
\begin{equation}
 {S}^{\beta_2}_X = \mathcal{P}_{sk}( {S}^{\beta_1}_X \circ \exp((\beta_2-\beta_1)X)). 
\end{equation}
Softassign transition only adapts a power operation and does not need the $X$, which indicates the change of $\beta$ more clearly. One can track and analyze the explicit change of $\beta$ easily.
\begin{figure*}[h]

  \centering
  \begin{subfigure}{0.45\linewidth}
  \includegraphics[width=1\linewidth]{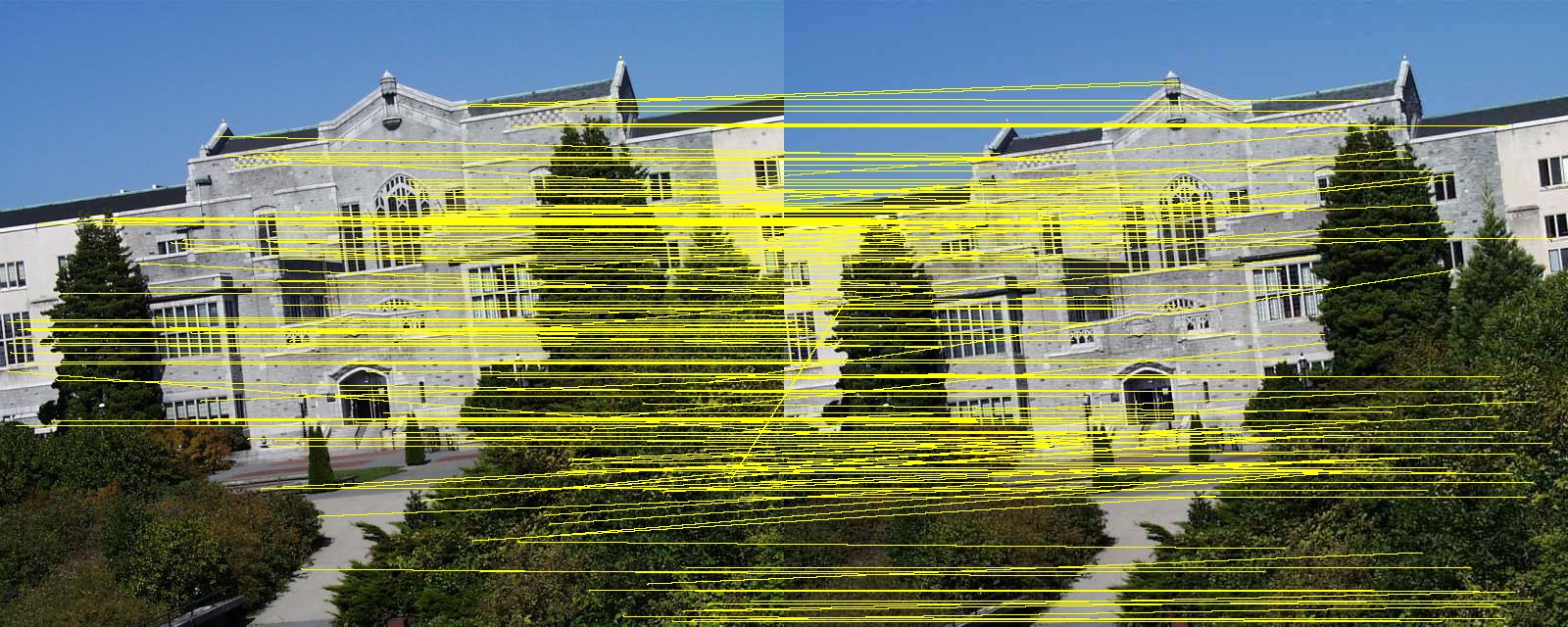}
    \caption{ubc}
  \end{subfigure}
  \hfill
  \begin{subfigure}{0.45\linewidth}
  \includegraphics[width=1\linewidth]{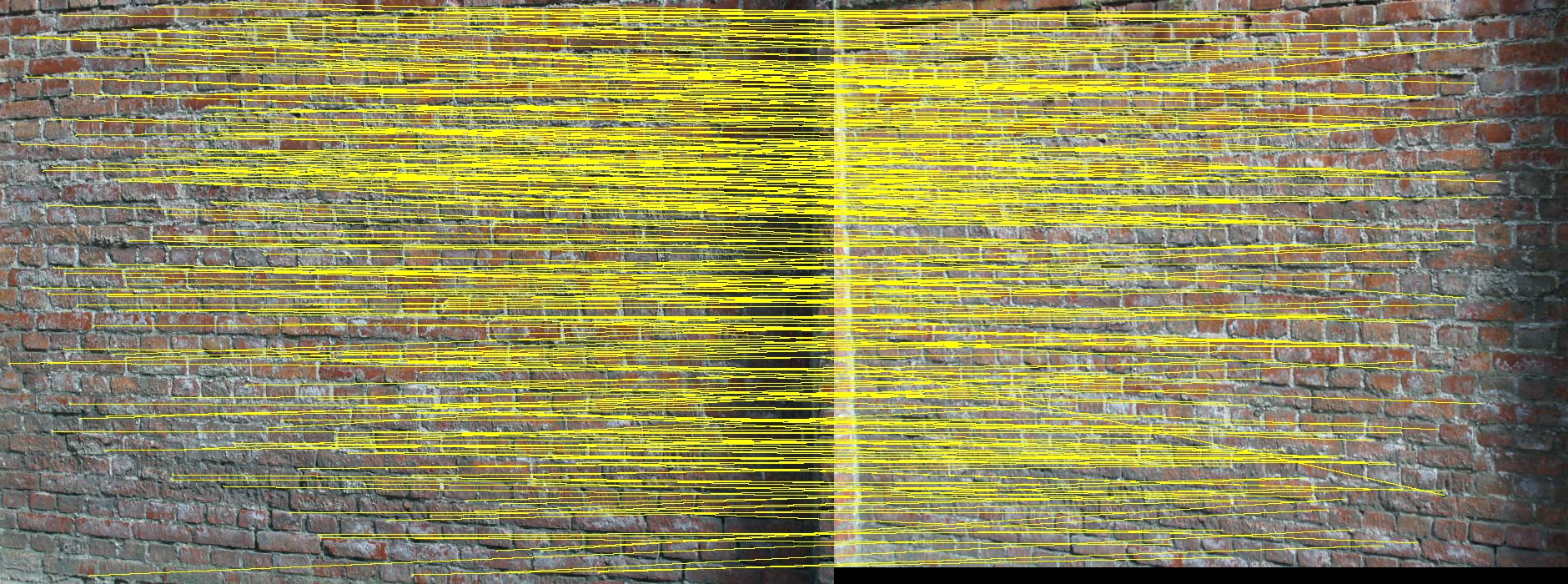}
    \caption{wall}
  \end{subfigure}
    \hfill
  \begin{subfigure}{0.45\linewidth}
  \includegraphics[width=1\linewidth]{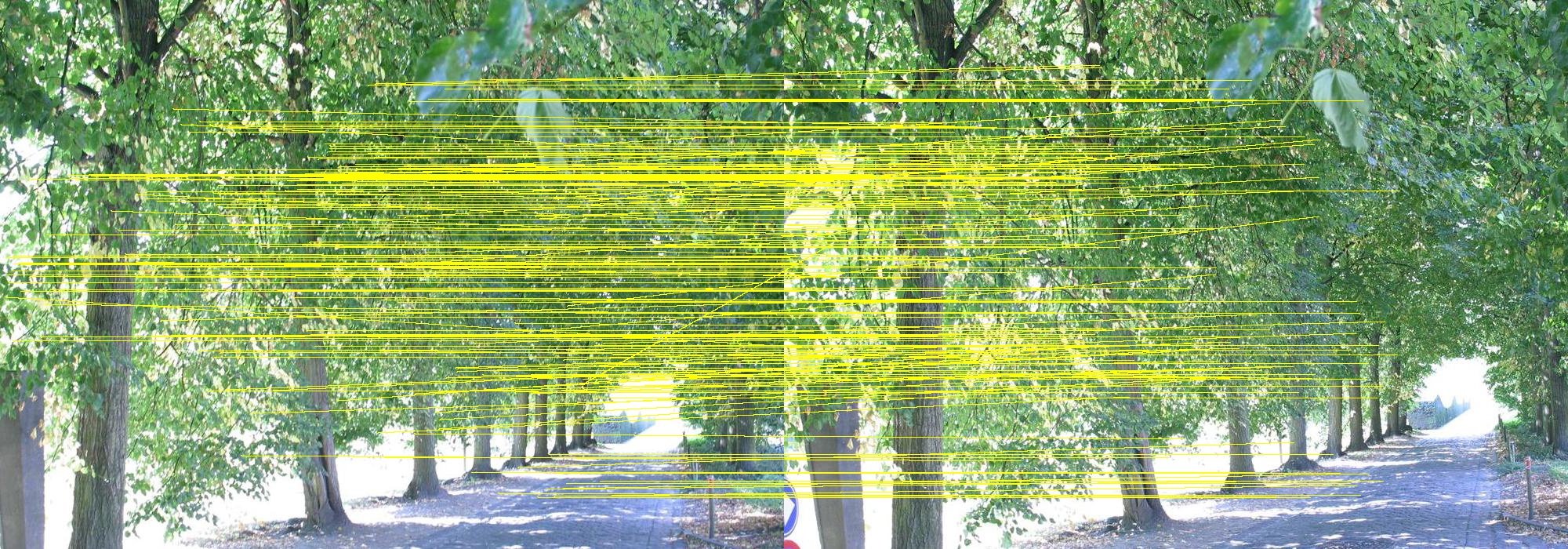}
    \caption{trees}
  \end{subfigure}
    \hfill
  \begin{subfigure}{0.45\linewidth}
  \includegraphics[width=1\linewidth]{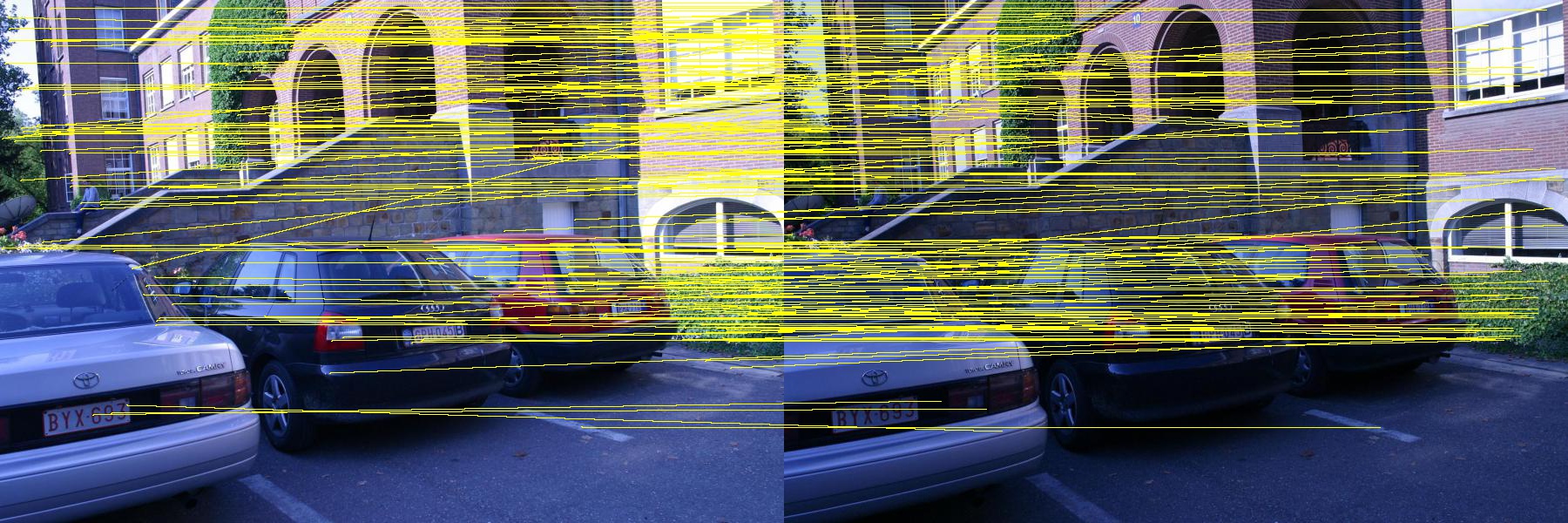}
    \caption{leuven}

  \end{subfigure}
  
  \caption{Graphs from real images matching. The yellow lines represent the correspondence between key points of the pictures.}
  \label{fig.real}
\end{figure*}

\section{Baselines and visualization of experiments}
Visualization of the matching results is shown in Figure \ref{fig.real}.
\textbf{Baselines:}
\begin{itemize}
    \item DSPFP \cite{lu2016fast} is a fast doubly stochastic projected fixed-point method with an alternating projection.
    \item GA \cite{gold1996graduated} can be considered a softassign-based projected fixed-point method with an outer annealing process.
    \item AIPFP \cite{leordeanu2009integer,lu2016fast} is an integer projected fixed point method with a fast greedy integer projection.
    \item SCG \cite{shen2022dyna} is a constrained gradient method with a dynamic softassign invariant to the nodes' number.
    \item GWL \cite{xu2019gromov} measures the distance between two graphs by Gromov-Wasserstein discrepancy and matches graphs by optimal transport.
    \item S-GWL \cite{xu2019scalable} is a scalable variant of GWL. It divides matching graphs into small graphs to match. (In Facebook network matching, we modify a parameter of S-GWL (beta: from 0.025 to 1) to avoid failure of partitioning. Otherwise, S-GWL becomes a very slow GWL).
    \item MAGNA++ \cite{vijayan2015magna++} is a global network alignment method for protein-protein interaction network matching, which focuses on node and edge conservation.
    \item GRASP \cite{hermanns2023grasp} aligns nodes based on functions derived from Laplacian matrix eigenvectors.
\end{itemize}

\clearpage
    \begin{center}
        \Huge \bfseries Updates 
    \end{center}

\textbf{Version 2.}
The reference [32], \textit{Dynamic softassign and adaptive parameter tuning for graph matching}, has been enhanced and renamed to \textit{CSGO: Constrained-Softassign Gradient Optimization for Large Graph Matching}.

\textbf{Version 3.}  
Correct the typo in the expression of the Sinkhorn projection by replacing
\begin{equation}
    \mathcal{P}_{\mathrm{sk}}(X) = X \circ (\mathbf{r}^T \otimes \mathbf{c})
\end{equation}
with the correct formulation
\begin{equation}
    \mathcal{P}_{\mathrm{sk}}(X) = X \circ (\mathbf{r} \otimes \mathbf{c}^T).
\end{equation}

\end{document}